\newtheorem{thm}{Theorem}[section]
\newtheorem{prop}[thm]{Proposition}
\newtheorem{cor}[thm]{Corollary}
\newtheorem{conj}[thm]{Conjecture}
\newtheorem{lemma}[thm]{Lemma}
\theoremstyle{definition}
\newtheorem{const}[thm]{Construction}
\newtheorem{ex}[thm]{Example}
\DeclareMathOperator{\dist}{dist}
\DeclareMathOperator{\ecc}{ecc}
\DeclareMathOperator{\diam}{d}
\DeclareMathOperator{\rad}{r}
\DeclareMathOperator{\Rand}{R}
\DeclareMathOperator{\brad}{br}
\DeclareMathOperator{\becc}{ecc}
\newcommand{\bct}[1]{\mathfrak{#1}}
\begin{document}

\title[Block eccentricity, radius, and the Randi\'c index]{Block eccentricity, a radius bound, and an application to the Randi\'c index}

\author[M. Doig]{Margaret I. Doig}
\address{Department of Mathematics, Creighton University}
\email{margaretdoig@creighton.edu}
\date{\today}

\keywords{eccentricity; center; radius; blocks; Randic index}
\subjclass[2010]{Primary 05C12, Secondary 05C69, 05C09}
\thanks{Supported by CURAS Summer Faculty Research Fund}

\begin{abstract}
We propose a framework for thinking about eccentricity in terms of blocks. We extend the familiar definitions of radius and center to blocks and verify that a central block contains all central points. We classify graphs into two types depending upon the relationship between block radius and vertex radius and between central blocks and central vertices; from this we derive a new lower bound on diameter in terms of the diameter of the central block. We also identify a subgraph which respects the block structure of the original graph and realizes the same vertex radius, and we use it to verify that cactus graphs satisfy a conjectured bound between vertex radius and the Randi\'c index, an invariant from mathematical chemistry.
\end{abstract}

\maketitle

\section{Introduction}\label{sec:intro}

We employ a modification of a traditional concept in graph theory (the extension of eccentricity to blocks) to derive new results on classic questions regarding the center of a graph and the relationship between radius and diameter.


\subsection*{Origins of Centrality}
The idea of centrality in a graph traces back to the nineteenth century and the work of Jordan. To aid in his classification of small connected graphs up to isomorphism, including their symmetry groups, he defined the version of centrality that we now call a centroid. Only well into the next century did the concept of centrality begin to be embraced as a way to identify vertices which are somehow medial in the graph, with numerous variations crafted for different types of applications, for example, betweenness centrality as a measure of the influence a node has on information flow in a network, or various forms of eigenvector centrality and their use in internet search algorithms.

The structure of the central points is not obvious. Jordan~\cite[Section~2]{jordan1869assemblages} showed that a tree's centroid consists of either one vertex or two adjacent vertices. Some time later, a parallel theorem emerged concerning the center of a tree, and, in 1953, Harary and Norman showed further that the center of any graph lies in a single block~\cite[Lemma 1]{harary1953dissimilarity}. (Note the theorem on the center of a tree was erroneously attributed to Jordan in his 1869 paper by, e.g., Buckley and Norman~\cite[Theorem~2.1]{buckley1990distance}, but Jordan defines ``center'' the way we now define ``centroid'').

\subsection*{Eccentricity and Centrality} 
We refine these results on the relationship between eccentricity and the block structure of a graph. In Section~\ref{sec:defn}, we extend the concept of eccentricity to a block and define block versions of center and radius. In Section~\ref{sec:ecc}, we utilize the relationship among articulation points, geodesics, and distance to prove Theorem~\ref{thm:bcenters}, that central blocks are essentially the same as Harary and Norman's blocks containing all central points, and further that graphs fall into two types, A and B, based on whether there is a unique central block (equivalently, based on whether the block radius is equal to the vertex radius). We also note in Theorem~\ref{thm:whereismax} the location of the periphery with respect to the central block(s).

\subsection*{Radius vs. Diameter}
In Section~\ref{sec:bound}, we apply our concept of block eccentricity to derive a refinement of the traditional bound $\rad \leq \diam \leq 2\rad$: we show that our type A graphs are exactly those which attain the upper bound, and we give a new (sometimes sharper) lower bound for a type B graph in terms of the diameter of its central block.

\subsection*{Radius and the Randi\'c index}

In Section~\ref{sec:subgraph}, we use our concept of a central block to identify a subgraph which realizes the radius of the original graph and whose structure reflects the block structure of the original graph (specifically, the BC-tree of the subgraph is a subgraph of the BC-tree). In Section~\ref{sec:randic}, we apply this subgraph to a question from mathematical chemistry: The Randi\'c index is an invariant studying the branching of a graph, and a long-standing conjecture states that it is bounded below by the radius (with the trivial exception of an even path). We find a recipe for the Randi\'c index when decomposing a graph into articulation components and verify the conjecture for cactus graphs.

\section{Definitions}\label{sec:defn}

For a graph $G$, we let $V(G)$ be the vertex set and $E(G)$ the edge set. We write the edge between adjacent vertices $u$ and $v$ as $[u,v]$. Unless otherwise stated, all graphs are connected.

\subsection{Blocks and the BC-tree}

A special type of vertex we will use extensively is an \emph{articulation point}, also known as a \emph{separating vertex} or \emph{cut vertex}. This is a vertex whose removal would increase the number of components. A graph without articulation points is \emph{nonseparable} or \emph{2-connected}, and a maximal non-separable subgraph is a \emph{block} (sometimes called a \emph{2-connected component}); note each articulation point is in all incident blocks. We denote the set of blocks $B(G)$. 

To describe the block structure, we may draw a new graph $\bct{G}$, called the \emph{block-cutpoint tree}, often shortened to \emph{BC-tree}. We assign one vertex $\bct{a}$ for each articulation point $a \in V(G)$ and one vertex $\bct{B}$ for each block $B \in B(G)$. If $v \in B$, we further assign an edge $[\bct{a},\bct{B}]$. If $H$ is a subgraph of $G$ whose BC-tree is $\bct{H}$, then $\bct{H}$ is a subgraph of $\bct{G}$, and we call $H$ the \emph{shadow} of $\bct{H}$.

We will regularly use the \emph{articulation components} or \emph{branches} at an articulation point $a$, the maximal subgraphs which do not contain $a$ as an articulation point; equivalently, they are the shadows in $G$ of the connected components of $\bct{G} - \bct{a}$ (note that, if we allow disconnected graphs, then it is traditional to require that the articulation point contain $a$, i.e., it must come from the same component as $a$). We extend this definition to a block: if $B$ is a block, the \emph{articulation components at $B$} are the maximal connected subgraphs which do not contain $B$ and do not have any vertex of $B$ as an articulation point; equivalently, they are the shadows in $G$ of the connected components of $\bct{G} - \bct{B}$. 

Note that there may be a complicated relationship between the two types of articulation components. If $a$ is an articulation point on block $B$, then the set of articulation components at $a$ contains one subgraph for each of the blocks incident to $a$. In contrast, the set of articulation components at $B$ splits apart the single component at $a$ which contains $B$ (and removes $B$ from each of its pieces), while it combines the remaining components into one.

\subsection{Eccentricity and its extension to blocks}

Let $\dist(v_1,v_2)$ be the standard distance metric on a graph, in other words, the minimum number of edges in a path between the vertices $v_1$ and $v_2$; such a shortest path may be called a \emph{geodesic}. We extend this idea of distance to blocks: for a vertex $v \in V(G)$ and a block $B \in B(G)$,
\[\dist(v,B) = \dist(B,v) = \min_{u \in V(B)} \dist(u,v)\]
and, for two blocks $B_1, B_2 \in B(G)$,
\[\dist(B_1,B_2) = \min_{v_i \in B_i} \dist(v_1, v_2).\]
We may also borrow the traditional idea of \emph{eccentricity}, or how far away the graph tends from a given vertex $v$,
\[\ecc(v) = \max_{u \in V(G)} \dist(u,v).\]
A vertex of maximal distance from a given vertex $v$ is called an \emph{eccentric vertex} of $v$. We may extend the concept of eccentricity to blocks (and other subgraphs) as well, as Buckley and Harary did \cite[Section 2.4]{buckley1990distance}:
\[\becc(B) = \max_{v \in V(G)} \dist(B,v).\]
Since $\becc(B)$ ranges over all possible vertices in $B$, the eccentricity of the block is bounded above by the eccentricity of its vertices, though it may not be realized by any given vertex (for example, consider a 4-cycle with a leaf attached at each vertex: the cycle has block eccentricity 1 while its vertices have eccentricity 3).

\begin{prop}\label{prop:ecc}
A block's eccentricity is bounded by its vertices' eccentricity:
\[\becc(B) \leq \min_{v \in V(B)} \ecc(v).\]
\end{prop}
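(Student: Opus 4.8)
The plan is to prove the stronger pointwise statement that $\becc(B) \le \ecc(w)$ for \emph{every} vertex $w \in V(B)$, and then take the minimum over $w \in V(B)$ only at the very end. Everything follows by unwinding the definitions together with the elementary observation that a minimum over a set is at most any individual member of that set.

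First I would fix an arbitrary $w \in V(B)$. For any vertex $v \in V(G)$, since $w$ is one of the vertices over which the minimum defining $\dist(B,v)$ is taken,
\[
\dist(B,v) = \min_{u \in V(B)} \dist(u,v) \le \dist(w,v) \le \ecc(w),
\]
where the last step is just the definition of eccentricity. As this holds for every $v \in V(G)$, taking the maximum over $v$ gives $\becc(B) = \max_{v \in V(G)} \dist(B,v) \le \ecc(w)$.

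Since $w \in V(B)$ was arbitrary, $\ecc(w)$ is an upper bound for $\becc(B)$ for each vertex of $B$, and therefore $\becc(B) \le \min_{w \in V(B)} \ecc(w)$, as claimed. I do not expect any genuine obstacle: the statement is immediate from the defining formulas. The only thing requiring a little care is bookkeeping of the quantifiers — the maxima in $\becc$ and $\ecc$ range over all of $V(G)$, while the minimum defining $\dist(B,\cdot)$ and the outer minimum in the statement range only over $V(B)$ — together with making sure the inequality $\dist(B,v) \le \dist(w,v)$ is invoked in the correct direction.
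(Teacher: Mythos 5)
Your proof is correct and is exactly the argument the paper intends (the paper gives no formal proof, only the preceding remark that $\dist(B,v)$ is a minimum over $V(B)$ and hence at most $\dist(w,v)$ for each $w\in V(B)$). The quantifier bookkeeping is handled properly, so nothing further is needed.
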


Traditionally, a vertex with minimal eccentricity is called a \emph{central point} and the set of them the \emph{center}, and its eccentricity is named the \emph{radius},
\[\rad(G) = \min_{v \in V(G)} \ecc(v).\]
We extend such ideas to blocks. If a block realizes the minimum block eccentricity, we call it a \emph{central block}, and we declare its eccentricity to be the \emph{block radius}:
\[\brad(G) = \min_{B \in B(G)} \becc(B).\]
This value is frequently much smaller than the radius (e.g., the 4-cycle with a leaf at each vertex has the cycle as the central block with block eccentricity 1, while the pendants have block eccentricity 3; the articulation points inherit the maximum of their incident block eccentricities, or 3). As a direct result of Proposition~\ref{prop:ecc},

\begin{prop}\label{prop:br_v_r}
Block radius is bounded by vertex radius:
\[\brad(G) \leq \rad(G).\]
\end{prop}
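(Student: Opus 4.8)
The plan is to deduce this directly from Proposition~\ref{prop:ecc} by evaluating the block eccentricity at a single, well-chosen block. First I would fix a central vertex $v \in V(G)$, so that $\ecc(v) = \rad(G)$; since $G$ is connected, $v$ lies in at least one block $B \in B(G)$ (using the convention that $K_1$ and $K_2$ count as blocks, so the degenerate cases cause no trouble).

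Then, because $v \in V(B)$, Proposition~\ref{prop:ecc} applied to $B$ yields
\[\becc(B) \leq \min_{u \in V(B)} \ecc(u) \leq \ecc(v) = \rad(G).\]
Finally, passing to the minimum over all blocks can only shrink the left-hand side, so
\[\brad(G) = \min_{B' \in B(G)} \becc(B') \leq \becc(B) \leq \rad(G),\]
which is exactly the assertion.

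I do not expect any real obstacle here: the entire content is carried by Proposition~\ref{prop:ecc}, and the remainder is just unwinding the definitions of $\brad$ and $\rad$. The one point worth a moment's care is the existence of a block containing the chosen central vertex, which is immediate from the block decomposition of a connected graph.
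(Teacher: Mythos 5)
Your argument is correct and is exactly the route the paper intends: it states Proposition~\ref{prop:br_v_r} as a direct consequence of Proposition~\ref{prop:ecc}, and your proof simply makes explicit the step of applying that bound to a block containing a central vertex and then minimizing over all blocks. Nothing further is needed.
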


Graphs divide into two different groups by behavior which are related to the relationship between $\rad(G)$ and $\brad(G)$. We define \emph{type A} to be the graphs where the invariants are equal and \emph{type B} where they are not. Theorem~\ref{thm:bcenters} will show that a type A graph is one with multiple central blocks and a type B graph one with a unique central block.

We would be remiss not to revisit maximal eccentricity as well. The maximal eccentricity of a vertex is the \emph{diameter}, and a vertex realizing it is a \emph{peripheral vertex} (the set of all such is the \emph{periphery}), 
\[\diam(G) = \max_{v \in V(G)}\ecc(v).\]
It is tempting to define block diameter as maximal block eccentricity, but such a definition would not be compatible with vertex diameter, as the block periphery would not necessarily contain the vertex periphery. To the contrary, we could define a peripheral block to be a block containing a peripheral vertex; further motivation for such a definition would be that we could define an upper distance \[\dist^\prime(B,v) = \max_{u \in V(B)} \dist(u,v)\] and an upper eccentricity \[\becc^\prime(B) = \max_{v \in V(B)} \ecc(v),\] in which case the block with the maximal upper eccentricity would be exactly a block containing a peripheral vertex, and the maximum such eccentricity would be equal to the traditional vertex diameter. We will not explore these concepts further as they would not contribute significantly to our investigations.

\subsection{Block eccentricity vs. eccentricity within the BC-tree}
We must take care not to confuse these concepts of eccentricity, radius, and diameter for blocks with their synonymous versions within the BC-tree itself. For example, if $G$ is any graph with two blocks $B_1$ and $B_2$ and articulation point $a$, then $\mathfrak{B_1}$ and $\mathfrak{B_2}$ will necessarily have the same vertex eccentricity in $\mathfrak{G}$, while $B_1$ and $B_2$ may have very different block eccentricities in $G$, depending on their relative sizes.

\section{Block eccentricity and centrality}\label{sec:ecc}

We now review the impact of separating structures on geodesics and distance measures before we arrive at our first main result in Theorem~\ref{thm:bcenters}, a characterization of type A and type B graphs in terms of the number and properties of their central points and central blocks as well as the relation between block and vertex radius. We will conclude with an application to self-centered graphs and a remark on the locations of eccentric vertices with respect to the central block.

\subsection{Separating structures and distance measures}

The rationale behind the name \emph{separating vertex} as a synonym for articulation point is that such a vertex $a$ may be said to \emph{separate} two other vertices $v_1$ and $v_2$ if all paths between $v_1$ and $v_2$ must pass through $a$; we may likewise say that one vertex or block \emph{separates} any two other vertices or blocks (although we will forbid the improper case of a block separating two non-articulation points inside it, or a non-articulation point inside it from anything else; this will prevent some obstreporous cases which would violate Proposition~\ref{prop:separating} and later results).

If a vertex or block is separating in this fashion, it has immediate consequences for distance measurements.

\begin{prop}\label{prop:separating}
If a vertex $a$ separates two other vertices $v_1$ and $v_2$, then:
\[\dist(v_1,v_2) = \dist(v_1,a) + \dist(a,v_2).\]
If $a$ separates two blocks $B_1$ and $B_2$:
\[\dist(B_1,B_2) = \dist(B_1,a) + \dist(a,B_2).\]
If $a$ separates vertex $v_1$ and block $B_2$: 
\[\dist(v_1,B_2) = \dist(v_1,a) + \dist(a,B_2).\]
If a block $B$ separates two vertices $v_1$ and $v_2$, and if $a_i$ is the articulation point in $B$ which separates it from $v_i$, then
\[\dist(v_1,v_2) = \dist(v_1,B) + \dist(a_1,a_2) + \dist(B,v_2).\]
If $B$ separates two other blocks $B_1$ and $B_2$ and $a_i$ is the articulation point in $B$ separating it from $B_i$, then
\[\dist(B_1,B_2) = \dist(B_1,B) + \dist(a_1,a_2) + \dist(B,B_2).\]
Finally, if $B$ separates vertex $v_1$ and block $B_2$ where $a_1$ and $a_2$ are the articulation points in $B$ separating it from $v_1$ and $B_2$ respectively, then
\[\dist(v_1,B_2) = \dist(v_1,B) + \dist(a_1,a_2) + \dist(B,B_2).\]
\end{prop}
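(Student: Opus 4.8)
The plan is to prove the six identities in two tiers: first the three statements about a separating \emph{vertex}, then bootstrap the three statements about a separating \emph{block} from them. For the vertex case, the first identity (separating two vertices) is the classical fact, so I would recall its one-line argument: any geodesic from $v_1$ to $v_2$ must pass through $a$, so it decomposes at $a$ into a $v_1$--$a$ walk and an $a$--$v_2$ walk whose lengths sum to $\dist(v_1,v_2)$; minimality forces each piece to be a geodesic, giving $\dist(v_1,v_2)\geq\dist(v_1,a)+\dist(a,v_2)$, and the reverse inequality is the triangle inequality. The vertex-block and block-block cases then follow by taking the minimum over the relevant vertex sets: if $a$ separates $v_1$ from $B_2$, then for each $u\in V(B_2)$, $a$ separates $v_1$ from $u$ (here is where we need the convention forbidding a block to ``separate'' a non-articulation point inside it, so that the separation genuinely passes to vertices of $B_2$), hence $\dist(v_1,u)=\dist(v_1,a)+\dist(a,u)$; minimizing over $u$ and noting $\dist(v_1,a)$ is constant yields the claim. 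The block-block case is the same with a double minimum.

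For the block case, I would first set up notation: if $B$ separates $v_1$ and $v_2$, let $a_i\in V(B)$ be the articulation point of $B$ that lies on every $v_1$--$v_2$ path on the $v_i$ side; the existence and uniqueness of $a_i$ is essentially the definition of $B$ separating $v_1$ from $v_2$ together with the tree structure of $\bct G$. The key observation is that every geodesic from $v_1$ to $v_2$ enters $B$ at $a_1$ and leaves at $a_2$ (entering elsewhere and detouring to $a_1$ cannot be shorter, and in a $2$-connected block the minimal-length passage between two of its vertices is realized inside it), so the geodesic decomposes into three geodesic segments: $v_1$ to $a_1$, $a_1$ to $a_2$ (within $B$), and $a_2$ to $v_2$. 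Summing lengths gives $\dist(v_1,v_2)=\dist(v_1,a_1)+\dist(a_1,a_2)+\dist(a_2,v_2)$; since $a_i$ realizes $\dist(v_i,B)$ (the geodesic from $v_i$ to $B$ must also hit $B$ first at $a_i$, by the same separation argument), this is exactly $\dist(v_1,B)+\dist(a_1,a_2)+\dist(B,v_2)$. The remaining two block-separating identities (block--block, vertex--block) then drop out by minimizing over the vertices of the non-$B$ arguments exactly as in the vertex tier, using that the identifications of $a_1,a_2$ do not depend on which vertex of $B_1$ or $B_2$ one picks.

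The main obstacle I anticipate is the clean justification that a geodesic crossing $B$ enters and exits precisely at $a_1$ and $a_2$ and traverses $B$ optimally in between — i.e., that one may ``reroute through $B$'' without lengthening the path. This reduces to: (i) any $v_1$--$v_2$ path meets $B$ in a set of vertices whose first and last (in path order) are $a_1$ and $a_2$ by the separation hypothesis, and (ii) replacing the portion of the path between its first and last contact with $B$ by a geodesic of $B$ from $a_1$ to $a_2$ does not increase length, which is immediate since that portion is itself a $a_1$--$a_2$ walk. Once that lemma-like observation is stated carefully, everything else is bookkeeping with the triangle inequality and taking minima, and I would present the vertex cases in full and then say ``the block cases follow \emph{mutatis mutandis}, replacing the single crossing point $a$ by the pair $a_1,a_2$ and invoking the rerouting observation.''
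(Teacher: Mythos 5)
Your argument is correct, and in fact the paper offers no proof of this proposition at all --- it is stated as an ``immediate consequence'' of the separation definitions --- so your write-up supplies a justification the paper leaves implicit. Your two-tier structure (vertex cases first, block cases by minimizing over $V(B_1)$, $V(B_2)$) is the right bookkeeping, and you correctly flag the one place where care is needed, namely the degenerate configurations the paper's convention is designed to exclude. Two small remarks. First, the ``rerouting through $B$'' step you worry about in (ii) is not actually needed: the quantity $\dist(a_1,a_2)$ in the statement is distance in $G$, not in $B$, so you never have to argue that the middle segment of the geodesic stays inside the block. The cleanest route to the fourth identity is simply two applications of the first: $a_1$ separates $v_1$ from $v_2$ (any $v_1$--$v_2$ path meets $B$ and hence meets $a_1$ first), giving $\dist(v_1,v_2)=\dist(v_1,a_1)+\dist(a_1,v_2)$, and then $a_2$ separates $a_1$ from $v_2$, giving $\dist(a_1,v_2)=\dist(a_1,a_2)+\dist(a_2,v_2)$; finally $\dist(v_1,a_1)=\dist(v_1,B)$ because every path from $v_1$ to a vertex of $B$ passes through $a_1\in V(B)$ before reaching any other vertex of $B$. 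Second, when you minimize over $u\in V(B_2)$ in the vertex--block case you should note explicitly that the identity $\dist(v_1,u)=\dist(v_1,a)+\dist(a,u)$ holds trivially when $u=a$ (in case $a\in V(B_2)$), so the minimization goes through uniformly; with that observation your proof is complete.
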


\subsection{Central blocks vs. central points}

We being by exploring how our concept of central block relates to the traditional concept of central point. For context, we review the motivating 1953 result of Harary and Norman~\cite{harary1953dissimilarity} and rephrase the proof in terms of our concept of block eccentricity. (Note ``Husimi trees'' in the title of the original manuscript refers to cactus graphs, and the lemma statement calls a block a ``star.'')

\begin{lemma}\cite[Lemma 1]{harary1953dissimilarity}\label{lem:vcenters}
There is a block which contains all central points.
\end{lemma}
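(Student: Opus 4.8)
The plan is a proof by contradiction that mirrors the classical argument but is organized around Proposition~\ref{prop:separating}. Suppose, contrary to the claim, that no single block contains every central point. Since a one-vertex center trivially lies in a block, the center then has at least two members which do not all share a block.

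First I would make a reduction in the BC-tree. Consider the smallest subtree $\bct{T}$ of $\bct{G}$ that contains the block-node of every non-articulation central vertex and the articulation-node of every central articulation point. If $\bct{T}$ had at most one block-node, a direct check shows all central points would lie in one block, against our assumption; hence $\bct{T}$ has two block-nodes, and, being a tree, it has an articulation-node $\bct{a}$ on the path between them. Translating back to $G$: the articulation point $a$ separates two central points $v_1,v_2$, with $a\notin\{v_1,v_2\}$; let $G_i$ be the branch at $a$ containing $v_i$, so $G_1\neq G_2$.

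Now choose an eccentric vertex $u$ of $a$, so $\dist(a,u)=\ecc(a)$. Since $u$ lies in a single branch at $a$ and $G_1\neq G_2$, that branch differs from at least one $G_i$, say from $G_1$; then $a$ separates $v_1$ and $u$, and Proposition~\ref{prop:separating} yields
\[
\ecc(v_1)\;\ge\;\dist(v_1,u)\;=\;\dist(v_1,a)+\dist(a,u)\;=\;\dist(v_1,a)+\ecc(a)\;\ge\;1+\ecc(a).
\]
Because $\ecc(a)\ge\rad(G)$ by the definition of the radius, this gives $\ecc(v_1)>\rad(G)$, contradicting centrality of $v_1$; so some block contains all central points. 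I would remark afterward that the same estimate is naturally read through block eccentricity: taking $u$ in a block $B$ of its branch and using the block forms in Proposition~\ref{prop:separating} expresses $\dist(a,u)$ as $\dist(a,B)$ plus an internal distance, with the inequality unchanged.

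The step I expect to be the main obstacle is the BC-tree reduction: showing carefully that failing to lie in a common block really does produce an articulation point strictly between two central points. The irritant is that a central vertex may itself be an articulation point and hence belong to several blocks at once, so ``two central points in no common block'' is not quite immediate from ``the center is not in one block'' and must be traced through $\bct{G}$. Once $a$, $v_1$, $v_2$ are in hand, the rest is the one-line estimate above, resting only on $\ecc(a)\ge\rad(G)$ and Proposition~\ref{prop:separating}.
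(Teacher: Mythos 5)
Your proof is correct and takes essentially the same route as the paper: both arguments produce an articulation point $a$ separating two central points and apply Proposition~\ref{prop:separating}, the only (immaterial) difference being that the paper concludes $\ecc(a)<\rad(G)$ while you equivalently conclude $\ecc(v_1)>\rad(G)$. Your BC-tree reduction supplies a careful justification for the existence of two central points sharing no block --- a step the paper simply asserts --- and it does go through, since distinct components of $\bct{T}-\bct{a}$ each contain a leaf of the minimal subtree and hence a central point distinct from $a$.
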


\begin{proof}
Say the graph $G$ has two central points $v_1$ and $v_2$ which are not in the same block. Then there must be some articulation point $a$ separating them. Consider the articulation components at $a$, and let $G_i$ be the component containing $v_i$ and $G_0$ be the graph union of the remaining components. In other words, $G_1$ is $a$ and everything that $a$ does not separate from $v_1$; $G_2$ is the same for $v_2$; and $G_0$ is $a$ and everything else.

By Proposition~\ref{prop:separating}, the vertices in $G_0$ and $G_1$ are strictly closer to $a$ than they are to $v_2$, so their distance to $a$ is less than $\ecc(v_2) = \rad(G)$; similarly, the vertices in $G_2$ (and again $G_0$) are closer to $a$ than to $v_1$, so their distance to $a$ also is less than $\ecc(v_1) = \rad(G)$. Thus, $a$ has a lower eccentricity than $\rad(G)$, which is a contradiction.
\end{proof}

This idea of a block which contains all the central points corresponds exactly to our new definition of a central block, and the two graphs types, A and B, correspond to the properties of the central block(s).

\begin{thm}\label{thm:bcenters}
A block is central iff it contains every central point. Further, the graph falls into one of two types:
\begin{enumerate}
\item \label{case:bcentersA} Type A: There is a unique central point which is an articulation point, all blocks containing it are central blocks, and \[\brad(G) = \rad(G).\]
\item \label{case:bcentersB} Type B: There is a unique central block $B$ which contains every central point, and \[\brad(G) < \rad(G).\]
\end{enumerate}
\end{thm}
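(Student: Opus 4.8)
The plan is to analyze how block eccentricity is distributed across the BC-tree and leverage Lemma~\ref{lem:vcenters} together with Proposition~\ref{prop:separating}. First I would observe that block eccentricity, viewed as a function on the blocks, behaves ``convexly'' along the BC-tree: if a block $B$ separates blocks $B_1$ and $B_2$, then by Proposition~\ref{prop:separating} one has $\becc(B) < \max(\becc(B_1),\becc(B_2))$ unless the eccentric vertices of $B_1$ and $B_2$ lie on the same side, which forces a strict inequality in the other direction. The upshot I want is: the set of central blocks is ``connected'' in the BC-tree, and, more precisely, either there is a unique central block, or every central block is incident to a single common articulation point $a$. This dichotomy is exactly the type B / type A split, so establishing it is the first milestone.

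Next I would pin down the two cases. In the type A case (several central blocks sharing the articulation point $a$), I would show $\ecc(a) = \becc(B)$ for each such block $B$: the eccentric vertex of $a$ lies outside every block incident to $a$ except possibly one, so for at least one incident central block $B$ the distance from $a$ to that eccentric vertex equals $\dist(B, \cdot)$, giving $\ecc(a) \le \brad(G)$; combined with Proposition~\ref{prop:br_v_r} this yields $\ecc(a) = \brad(G) = \rad(G)$, so $a$ is a central point. Uniqueness of the central point then follows from Lemma~\ref{lem:vcenters} (all central points lie in one block $B'$) plus the observation that a central point strictly interior to a block $B'$ would force $\becc(B') < \ecc(v) = \rad(G)$ while every other vertex of $B'$ also has eccentricity $\rad(G)$, contradicting minimality at $a$ unless the center is the single vertex $a$; I would also check that every block incident to $a$ is forced to be central by the convexity statement. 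In the type B case (unique central block $B$), Lemma~\ref{lem:vcenters} already gives a block $B'$ containing all central points; I would argue $B' = B$ by showing $\becc(B') \le \rad(G)$ with equality impossible — any vertex $v \in B'$ has $\ecc(v) \ge \rad(G)$, and if $\becc(B') = \rad(G)$ were realized at a vertex outside $B'$ one could run the Harary--Norman separation argument to push the center into a neighboring block, contradicting that all central points lie in $B'$ — hence $\becc(B') < \rad(G)$, forcing $B'$ to be the (by hypothesis unique) central block and $\brad(G) < \rad(G)$.

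Finally I would close the loop on the biconditional ``a block is central iff it contains every central point.'' One direction: in type B the unique central block contains all central points by the previous paragraph; in type A a central block is one incident to $a$, and since the center is $\{a\}$, it trivially contains every central point. Conversely, a block containing every central point must, in type B, be the block $B'$ from Lemma~\ref{lem:vcenters}, which we showed is the central block; in type A it must contain $a$, and all such blocks were shown to be central.

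The main obstacle I expect is the convexity/monotonicity step: making rigorous that block eccentricity cannot have a strict local minimum at a block lying ``between'' two others in the BC-tree, and extracting from this the clean statement that the central blocks either form a single block or all pass through one articulation point. This requires carefully tracking, via Proposition~\ref{prop:separating}, which articulation point of the separating block $B$ the relevant eccentric vertices are routed through, and handling the boundary case where an eccentric vertex of $B$ itself lies inside one of $B_1, B_2$. Once that structural lemma is in hand, the rest is bookkeeping with Lemma~\ref{lem:vcenters} and Proposition~\ref{prop:br_v_r}.
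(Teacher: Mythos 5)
Your first milestone is a genuinely different route from the paper's, and the ``convexity'' lemma you are worried about is in fact true and always strict: if $B$ separates $B_1$ from $B_2$ through distinct articulation points $a_1 \neq a_2$ of $B$, then an eccentric vertex $u$ of $B$ is attached to $B$ at some articulation point $a_u$, which differs from at least one of $a_1, a_2$, and Proposition~\ref{prop:separating} gives $\becc(B_i) \geq \dist(B_i,u) \geq \dist(a_i,a_u) + \dist(B,u) > \becc(B)$ for that $i$; so no case analysis on ``which side the eccentric vertices lie'' is needed. Since the BC-tree is a tree, this does yield your dichotomy (any two central blocks must share an articulation point, and three pairwise-adjacent ones must share a common one). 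The paper instead proves directly that every central block contains every central point, by examining an eccentric vertex of an articulation point separating a central block from a hypothetical outside central point, and reads the dichotomy off from the fact that distinct blocks meet in at most one vertex. Your route buys a clean structural picture of where the central \emph{blocks} sit; the paper's buys, in one stroke, the relation between central blocks and central \emph{points}, which is the part your outline does not recover.

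That is the genuine gap. In your type B paragraph you conclude from $\becc(B') < \rad(G)$ that $B'$ ``is forced to be the unique central block''; this is a non sequitur, since $\becc(B') < \rad(G)$ does not make $B'$ central (another block could have strictly smaller block eccentricity). Your convexity lemma constrains only the set of central blocks and says nothing about where the central points sit relative to them, so the claim that the unique central block contains every central point --- the heart of case~\eqref{case:bcentersB} and of the ``central $\Rightarrow$ contains the center'' direction --- is never established; this is exactly what the paper's separation argument supplies and what convexity does not replace. A second, smaller gap is in type A: your uniqueness-of-the-central-point argument leans on the unjustified claim that ``every other vertex of $B'$ also has eccentricity $\rad(G)$''; a correct argument instead notes that each central block through $a$ realizes its eccentricity only outside its own articulation component at $a$, so any $v \neq a$ is separated by $a$ from some vertex at distance $\rad(G)$ from $a$ and hence has $\ecc(v) > \rad(G)$. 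Finally, be aware that the literal ``contains every central point $\Rightarrow$ central'' direction fails in type B for a block meeting the center only in an articulation point (the paper's own 6-cycle example: the pendant edge at $v_6$ contains the unique central point but is not central), so your closing paragraph's converse should be confined to the type A case.
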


\begin{ex}
Note that a type B graph may have a single central point, even one which is an articulation point: consider a 6-cycle with vertices labelled, in order, $v_1, v_2, \cdots, v_6$. Add a pendant at $v_2$ and $v_4$ and a length 2 path at $v_6$. Now $v_6$ is the unique center with vertex eccentricity 3, and the 6-cycle is the unique central block with block eccentricity 2. 
\end{ex}

See Corollary~\ref{cor:bcentersB} for a complementary result to Theorem~\ref{thm:bcenters}\eqref{case:bcentersB}, that
\[\rad(G) - \diam(B) \leq \brad(G).\]
It is important to note that a comparable result is not true for peripheral blocks: for example, for a 6-cycle and a 4-cycle joined at a single vertex, the 4-cycle is the only peripheral block (with block eccentricity 3, as opposed to the 6-cycle at 2), but the diameter is realized by a point from each cycle.

\begin{proof}[Proof of Theorem~\ref{thm:bcenters}]
First, we prove that every central block contains every central point by repeated applications of Proposition~\ref{prop:separating}. This will also imply that either there is a unique central block or else there are multiple central blocks intersecting at a unique central point.

Say there is a central block $B$ and a central point $v \not\in B$. Let $a$ be any articulation point separating $v$ from $B$ (in particular, $a \neq v$, although $a$ may be in $B$). Consider the articulation components at $a$: let $G_B$ be the articulation component containing $B$ (i.e., everything $a$ does not separate from $B$), let $G_v$ be the component containing $v$ (i.e., everything $a$ does not separate from $v$), and let $G_0$ be the union of the remaining components (or, if there are none, set $G_0 = \{a\}$). These subgraphs cover $G$ and intersect pairwise at $a$. 

Now let $u$ be an eccentric vertex of $a$, that is, \[\dist(a,u) = \ecc(a) \geq \rad(G).\] This vertex must be in $G_v$. If it were not, then $a$ would separate $u$ from $v$,so \[\dist(u,v)  = \dist(u,a) + \dist(a,v) \geq 1 + \rad(G),\] yet $\ecc(v)  = \rad(G)$ (note our assumptions implied $G$ is not an isolated vertex, so $u \neq a$).  Thus $u \not \in G_B$, or $a$ separates $u$ from $B$, so \[\dist(B,u) = \dist(B,a) + \dist(a,u) \geq \rad(G).\] We also know \[\dist(B,u) \leq \becc(B) = \brad(G) \leq \rad(G).\] Therefore, the inequalities are equalities, which means that $\dist(B,a) = 0$, so $a \in B$; that $\dist(a,u) = \rad(G)$, so $a$ is a central point; and that $\brad(G) = \rad(G)$.

These three conclusions are contradictory. Since $a$ was an arbitrary vertex separating $v$ from $B$, then $a \in B$ implies that $v$ must be in a block adjacent to $B$, call it $B_v$, and $B$ and $B_v$ share $a$ as an articulation point. If we calculate $\becc(B_v)$, it will be too small: Say $u \not \in G_v$, that is, $u$ is separated from the central point $v$ by $a$, so \[\dist(u,B_v) = \dist(u,a) < \dist(u,v) = \rad(G).\] Additionally, if $u \in G_v$, either $u \in B_v$ and $\dist(u,B_v) = 0$, or else $B_v$ separates $u$ from $a$ (since $a \in B_v$ and $a$ is not an articulation point in $G_v$), in other words, \[\dist(u,B_v) < \dist(u,a) = \rad(G).\] Therefore, $\ecc(B_v) < \rad(G)$, which is not compatible with $\brad(G) = \rad(G)$. 

To see that any block containing the center is central, and to check the remaining conditions, we consider two separate cases.

Say $G$ has multiple central blocks. Their intersection must contain the set of central points, so there must be a unique central point $a$ which is an articulation point. We need to show that $\brad(G) = \rad(G)$, so that the graph is type B, and also that every block containing $a$ is indeed a central block. Consider the set of articulation components at $a$, call them $\{G_i\}$, and let $B_i$ be the block in each $G_i$ which contains $a$. Then $a$ has some eccentric vertex $v$; without loss of generality, $v \in G_1$ and (since there are multiple central blocks) $B_2$ is a central block. Since $a$ separates $v$ from $B_2$, \[\dist(B_2,v) = \dist(a,v) = \rad(G),\] that is, $\brad(G) = \becc(B_2) \geq \rad(G)$. In fact, no $B_i$ has greater eccentricity: since $a$ is not an articulation point in any $G_i$, it is separated from $G_i - B_i$ by $B_i$, that is, \[\dist(u,B_i) < \dist(u,a) \leq \rad(G)\] for all $u \in G_i$, and $B_i$ is separated from any other articulation component by $a$, so \[\dist(u,B_i) = \dist(u,a) \leq \rad(G)\] for any $u \not \in G_i$.

Say $G$ has a unique central block $B$. We need only verify $\brad(G) < \rad(G)$. Let the articulation components of the graph at $B$ be called $\{B \cup G_i\}$, let $a_i$ be the articulation point shared by $B$ and $G_i$, and let $B_i$ be the block in each $G_i$ containing $a_i$. Recall Proposition~\ref{prop:br_v_r} that $\brad(G) \leq \rad(G)$ and assume $\brad(G) = \rad(G)$; then there is some vertex $v$ of distance $\rad(G)$ from $B$, say $v \in G_1$, in which case \[\dist(a_1,v) = \dist(B,v) = \rad(G).\] If $u$ is any vertex not in $G_1$, then $a_1$ separates $u$ from $v$, so \[\dist(u,v) > \dist(a_1,v) = \rad(G),\] and $u$ is not a central point. In particular, no vertex in $B$ other than $a$ may be central, but $B$ must contain a non-trivial center, so $a$ is a central vertex. Additionally, since $a_1 \in B_1$, Proposition~\ref{prop:ecc} says \[\ecc(B_1) \leq \ecc(a_1) = \rad(G),\] and $B_1$ is also a central block, which is a contradiction.
\end{proof}

\subsection{Self-centered graphs}
Theorem~\ref{thm:bcenters} completely characterizes graphs with maximal diameter of $2\rad$. As an immediate corollary, we can also find a necessary condition for a graph to have minimal diameter. These are known as \emph{self-centered graphs} because every vertex has the same eccentricity and so is a central vertex.
\begin{cor}
If $\rad(G)  = \diam(G)$, then $G$ is non-separable. 
\end{cor}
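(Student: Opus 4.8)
The plan is to prove the contrapositive: assuming $G$ is separable, I will exhibit two vertices at distance strictly greater than $\rad(G)$, which forces $\diam(G) > \rad(G)$ and hence $\rad(G) \neq \diam(G)$. It is natural to route this through Theorem~\ref{thm:bcenters}, since $\rad(G) = \diam(G)$ means every vertex has eccentricity $\rad(G)$, so the center is all of $V(G)$. The cases $|V(G)| \le 2$ are immediate ($K_1$ and $K_2$ have no articulation point), so I may assume $|V(G)| \ge 3$; in fact any separable graph has at least three vertices, as deleting an articulation point leaves at least two components and hence at least two further vertices.

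With at least three central points, $G$ cannot be of type A, since by Theorem~\ref{thm:bcenters}\eqref{case:bcentersA} a type A graph has a unique central point. So $G$ is of type B, and Theorem~\ref{thm:bcenters}\eqref{case:bcentersB} supplies a unique central block $B$ containing every central point, i.e.\ containing every vertex of $G$; thus $V(B) = V(G)$. Since a block is a maximal $2$-connected subgraph and adjoining an edge to a $2$-connected graph on at least three vertices keeps it $2$-connected, maximality forces $E(B) = E(G)$, so $B = G$. As $B$ is nonseparable, so is $G$, contradicting the assumption; this completes the contrapositive.

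No step here is a real obstacle; the only point needing a touch of care is the implication $V(B) = V(G) \Rightarrow B = G$, which leans on stability of $2$-connectivity under edge addition together with the maximality in the definition of a block. A fully self-contained alternative, avoiding Theorem~\ref{thm:bcenters}, is equally short: if $a$ is an articulation point then $\ecc(a) \ge \rad(G) \ge 1$, so an eccentric vertex $u$ of $a$ has $\dist(a,u) = \ecc(a) \ge \rad(G)$ and $u \neq a$; choosing any $w$ in a component of $G - a$ other than the one containing $u$, the vertex $a$ separates $u$ and $w$, and Proposition~\ref{prop:separating} gives $\dist(u,w) = \dist(u,a) + \dist(a,w) \ge \rad(G) + 1$, so $\diam(G) > \rad(G)$. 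I expect the write-up to use essentially this elementary argument, possibly phrased via the central vertex furnished by Theorem~\ref{thm:bcenters}.
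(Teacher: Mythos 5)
Your argument is correct and is essentially the one the paper intends (it leaves the proof implicit as an ``immediate'' consequence of Theorem~\ref{thm:bcenters}): self-centeredness makes every vertex central, so the central block guaranteed by the theorem contains all of $V(G)$ and, by maximality of blocks, equals $G$, which is therefore nonseparable. Your elementary alternative via an eccentric vertex of an articulation point is also valid and mirrors the Harary--Norman argument in Lemma~\ref{lem:vcenters}, but no new ideas beyond the paper's are needed.
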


\subsection{Eccentric vertices}

We now investigate the implications of central blocks for eccentric vertices and geodesics realizing  eccentricity.

\begin{thm}\label{thm:whereismax}
Let $G$ be a graph.
\begin{enumerate}
\item \label{case:whereismaxA} If $G$ is type A with $a$ the central point and $\{G_i\}$ the articulation components at $a$, then any $v \in G_i$ has at least one eccentric vertex outside of $G_i$. In particular, $a$ itself has eccentric vertices in at least two different articulation components $G_i$.
\item \label{case:whereismaxB} If $G$ is type B with $B$ the central block and $\{G_i\}$ the articulation components at $B$, then any $v \in G_i$ has all its eccentric vertices outside $G_i$.
\end{enumerate}
In particular, any $v$ has an eccentric vertex $u$ so that the geodesic from $v$ to $u$ passes through at least a central block.
\end{thm}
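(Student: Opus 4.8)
The plan is to reduce each numbered case to a single distance identity at the appropriate articulation point, and then obtain the closing statement by tracing geodesics through the block structure. For Case~\eqref{case:whereismaxA} I will first record the two-sided estimate: for every vertex $x$, $\dist(v,x)\le\dist(v,a)+\dist(a,x)\le\dist(v,a)+\rad(G)$, since $\ecc(a)=\rad(G)$ in a type~A graph; hence $\ecc(v)=\dist(v,a)+\rad(G)$ as soon as $v$ has an eccentric vertex $w$ with $a$ on a $v$--$w$ geodesic. As $a$ separates every $v\in G_i\setminus\{a\}$ from everything outside $G_i$, it therefore suffices (and handles $v=a$ directly as well) to exhibit, for each component $G_i$, an eccentric vertex of $a$ outside $G_i$ --- equivalently, to show the eccentric vertices of $a$ do not all lie in a single component, which is exactly the ``in particular'' clause of this case. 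I will prove that by contradiction: if they all lay in $G_k$, then the block $B_k$ of $G_k$ through $a$ is central (in type~A \emph{every} block through $a$ is central, by Theorem~\ref{thm:bcenters}), so $\becc(B_k)=\rad(G)$; a vertex $u$ with $\dist(B_k,u)=\rad(G)$ would be eccentric for $a$, hence in $G_k$, but then Proposition~\ref{prop:separating} together with the fact that $a$ is not a cut vertex of $G_k$ forces $\dist(B_k,u)<\rad(G)$, a contradiction.

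For Case~\eqref{case:whereismaxB}, write $a_i$ for the articulation point $V(B)\cap V(G_i)$. The single observation driving everything is that $a_i$ is the vertex of $B$ nearest to every $x\in G_i$, so $\dist(a_i,x)=\dist(B,x)\le\becc(B)=\brad(G)$, while $\ecc(a_i)\ge\rad(G)>\brad(G)$ because $G$ is type~B. Thus every eccentric vertex $y$ of $a_i$ lies outside $G_i$ --- this is the statement for $v=a_i$ --- and for $v\in G_i\setminus\{a_i\}$, where $a_i$ separates $v$ from $y$, Proposition~\ref{prop:separating} gives $\dist(v,y)=\dist(v,a_i)+\ecc(a_i)>\dist(v,a_i)+\brad(G)\ge\dist(v,x)$ for all $x\in G_i$; so no vertex of $G_i$ can be eccentric for $v$.

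For the closing statement I take the eccentric vertex $u$ produced above and follow a $v$--$u$ geodesic. In type~A it runs from $v$ to $a$ and then leaves $a$ into the component containing $u$, so its first edge past $a$ lies in a central block through $a$. In type~B it passes through $a_i$ (or, when $v\in V(B)$, through the articulation point of $B$ separating $v$ from $u$) and then must traverse at least one edge of the central block $B$ to reach the side of $B$ containing $u$; here I use that a geodesic between two vertices of a block stays inside that block, and that the two relevant articulation points of $B$ are distinct. I expect the actual work to be twofold: the degenerate-position bookkeeping in this last step (when $v$ or $u$ itself lies in the central block, or $v$ is an articulation point of it), and --- more essentially --- the step in Case~\eqref{case:whereismaxA} that rules out one component absorbing all eccentric vertices of $a$, which genuinely needs ``every block through $a$ is central'' rather than just the existence of two central blocks.
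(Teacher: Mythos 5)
Your proof is correct and follows essentially the same route as the paper's: in type A you exploit that the block of $G_i$ through $a$ is central with block eccentricity $\rad(G)$ that cannot be realized inside $G_i$ (you phrase this as a contradiction, the paper argues it directly), and in type B you use $\becc(B)=\brad(G)<\rad(G)\le\ecc(a_i)$ to push the eccentric vertices of $a_i$ outside $G_i$, then transfer both conclusions to arbitrary $v\in G_i$ via the separation identities of Proposition~\ref{prop:separating}. No gaps.
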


\begin{ex}
Note that Theorem \ref{thm:whereismax}\eqref{case:whereismaxA} does not prevent $v$ from having eccentric vertices in its own $G_i$. Consider a square pyramid, i.e., a 4-cycle with another vertex at the peak joined to each of the cycle's vertices by an edge. Duplicate the pyramid and identify the peaks to make a vertex $a$: then $a$ is the unique center and an articulation point with 2 articulation components and eccentricity 1, realized by the other 4 vertices in each component. Any other vertex has eccentricity 2 with 5 eccentric vertices, all 4 vertices other than $a$ in the other articulation component and the non-adjacent vertex from its own component.
\end{ex}

\begin{ex}
Similarly, Theorem \ref{thm:whereismax}\eqref{case:whereismaxB} does not specify whether $v$'s eccentric vertices are in $B$ or another $G_i$. Consider a 4-cycle and a pair of 3-cycles attached at adjacent vertices. The 4-cycle is the central block $B$ with block eccentricity 1, and both of its articulation points are central points with eccentricity 2 (realized at the opposite vertex on the 4-cycle and at the pair of non-articulation points on the 3-cycle it does not touch). A non-articulation point on one of the 3-cycles has 3 eccentric vertices, one on the 4-cycle and the other 2 on the other 3-cycle.
\end{ex}

\begin{proof}[Proof of Theorem~\ref{thm:whereismax}]
Say $G$ is type A. Let $B_i$ be the block in $G_i$ which contains the central point $a$. By Theorem~\ref{thm:bcenters}\eqref{case:bcentersA}, $B_i$ is central and has eccentricity $\rad(G)$; since it is strictly closer to all vertices in $G_i$ than $a$ is, its block eccentricity is not realized in $G_i$. That is, for any chosen $G_i$, there is some vertex $v_j$ in another $G_j$ of distance $\rad(G)$ from $B_i$, i.e., $\dist(a,v_j) = \rad(G)$. Now consider any vertex $u_i \in G_i$: its distance to any $v_i \in G_i$ is 
\[\dist(u_i,v_i) \leq \dist(u_i, a_i) + \dist(a_i, v_i) \leq \dist(u_i, a_i) + \rad(G);\]
however, it is separated from $v_j$ by $a$, and so its distance to $v_j$ is
\[\dist(u_i,v_j) = \dist(u_i,a) + \dist(a,v_j) = \dist(u_i,a) + \rad(G).\]
In other words, while the eccentricity of $u_i$ may be realized inside its own $G_i$, it is definitely realized in another $G_j$.

Say $G$ is type B, and let $a_i$ be the articulation point shared by $G_i$ and the central block $B$. Theorem~\ref{thm:bcenters}\eqref{case:bcentersB} says $\becc(B) < \rad(G)$, but $\ecc(a_i) \geq \rad(G)$, so the eccentricity of any $a_i$ is realized in $B$ or another $G_j$, i.e., there is some $v_j \in G_j$ with $\dist(a_i,v_j) \geq \rad(G)$. For any other $u \in G_i$, note that $v_i \in G_i$ obeys
\[\dist(u,v_i) \leq \dist(u,a_i) + \dist(a_i,v_i) < dist(u,a_i) + \rad(G)\]
while $v_j$ obeys
\[\dist(u,v_j) = \dist(u,a_i) + \dist(a_i,v_j) \geq \dist(u,a_i) + \rad(G).\]
That is, the eccentricity of $u$ is not realized in $G_i$.
\end{proof}

\section{Application: Bounding radius and diameter}\label{sec:bound}

Recall the classical bound 
\[\rad(G) \leq \diam(G) \leq 2\rad(G).\]
A cycle with an even number of edges realizes the lower bound, a path with an even number of edges realizes the upper bound, and the intermediate values are realized by, for example, the minimal order graphs of Ostrand~\cite{ostrand1973graphs}. While the classical bound is sharp, it still leaves some opportunities for further refinement; for example, we verify that graphs of type A all satisfy the upper bound, and graphs of type B may be given a sharper range using the diameter of the central block.

\begin{thm}\label{thm:r_vs_d}
If $G$ is type A,
\[\diam(G) = 2\rad(G).\]
If $G$ is type B with central block $B$,
\[2\rad(G) - \diam(B) \leq \diam(G) \leq 2\rad(G).\]
\end{thm}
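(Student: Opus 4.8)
The plan is to treat the two upper bounds as the classical fact $\diam(G)\le 2\rad(G)$ (the triangle inequality through a central vertex), so that the real content is the two lower inequalities, which I would extract directly from Theorems~\ref{thm:bcenters} and~\ref{thm:whereismax} together with Proposition~\ref{prop:separating}.

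For type A this is quick. By Theorem~\ref{thm:bcenters}\eqref{case:bcentersA} the unique central point $a$ is an articulation point with $\ecc(a)=\rad(G)$, and by Theorem~\ref{thm:whereismax}\eqref{case:whereismaxA} it has eccentric vertices $u$ and $w$ in two distinct articulation components at $a$. Then $a$ separates $u$ from $w$, so Proposition~\ref{prop:separating} gives $\dist(u,w)=\dist(u,a)+\dist(a,w)=2\rad(G)$; combined with the classical upper bound this forces $\diam(G)=2\rad(G)$.

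For type B I would produce a single long geodesic. Pick a central point $c$, which by Theorem~\ref{thm:bcenters}\eqref{case:bcentersB} lies in the central block $B$, and let $u$ be an eccentric vertex of $c$, so $\dist(c,u)=\rad(G)$. If $u\in B$ then $\rad(G)=\dist(c,u)\le\diam(B)$, and already $2\rad(G)-\diam(B)\le\rad(G)\le\diam(G)$. Otherwise $u$ lies in some articulation component $G_1$ at $B$, meeting $B$ only at the cut vertex $a_1$, with $u\ne a_1$; since $a_1$ separates $c$ from $u$, Proposition~\ref{prop:separating} together with $\dist(c,a_1)\le\diam(B)$ gives $\dist(a_1,u)=\rad(G)-\dist(c,a_1)\ge\rad(G)-\diam(B)$. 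Since every vertex has eccentricity at least $\rad(G)$, the point $a_1$ has an eccentric vertex $w$ with $\dist(a_1,w)\ge\rad(G)$. Now comes the one step that uses the type~B hypothesis in an essential way: $w$ cannot lie in $G_1$, for then $\dist(B,w)=\dist(a_1,w)\ge\rad(G)$, contradicting $\dist(B,w)\le\becc(B)=\brad(G)<\rad(G)$. Hence $a_1$ separates $u$ from $w$, and Proposition~\ref{prop:separating} gives $\dist(u,w)=\dist(u,a_1)+\dist(a_1,w)\ge\bigl(\rad(G)-\diam(B)\bigr)+\rad(G)=2\rad(G)-\diam(B)$, so $\diam(G)\ge 2\rad(G)-\diam(B)$.

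The main obstacle is not computational but choosing the right pair of endpoints in the type~B case: the correct move is to route through the articulation point $a_1$ of the central block nearest to $u$, and then to observe that the block-radius inequality $\brad(G)<\rad(G)$ is exactly what prevents an eccentric vertex of $a_1$ from ``doubling back'' into $G_1$. Without that observation the two pieces $\dist(u,a_1)$ and $\dist(a_1,w)$ could overlap and the bound would collapse. Everything else reduces to repeated applications of Proposition~\ref{prop:separating} and the elementary fact that distances between two vertices of a block are at most $\diam(B)$.
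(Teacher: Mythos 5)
Your proof is correct and follows essentially the same route as the paper: the type A case via two eccentric vertices of the central articulation point in distinct branches, and the type B case by concatenating $\dist(u,a_1)\ge\rad(G)-\diam(B)$ with $\dist(a_1,w)\ge\rad(G)$ across the separating vertex $a_1$. The only cosmetic difference is that you re-derive inline (from $\brad(G)<\rad(G)$) the fact that an eccentric vertex of $a_1$ cannot lie in $G_1$, where the paper simply cites Theorem~\ref{thm:whereismax}.
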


The lower bound is an improvement of the classical one when $\diam(B) < \rad(G)$, which is not rare in graphs with a significant number of articulation points or where several blocks are of comparable size, for example, in the wedge of two cycles of similar (but not the same) size. 

The proof below also demonstrates a new upper bound,
\[\diam(G) \leq 2\brad(G) + \diam(B),\]
which is an improvement upon the classical one when $\frac{1}{2}\diam(B) < \rad(G) - \brad(G)$, which is rarer but still possible. Let us examine a situation where the lower bound is sharp, which will motivate the proof for the general case:
\begin{ex}\label{ex:sharp}Let $G$ be a graph consisting of a $2n$-cycle $B$ with a path of length $l$ attached at each of its vertices. Then the diameter is realized by the endpoints of two paths attached at opposite points on the cycle, that is, $\diam(G) = 2l + n.$ Similarly, each vertex on the cycle itself is central, and the most distant vertex is the endpoint of the path immediately opposite it, so $\rad(G) = l + n.$ Since $\diam(B) = n$, $\diam(G) = 2\rad(G) - \diam(B).$
\end{ex}

\begin{proof}[Proof of Theorem \ref{thm:r_vs_d}]
For type A graphs, if $a$ is the unique central point and $G_i$ are the articulation components at $a$ with $B_i$ the block in $G_i$ containing $a$, then we verified in Theorem~\ref{thm:bcenters} that $\brad(G) = \rad(G)$ and the $B_i$ are central, i.e., $\becc(B_i) = \ecc(a)$, and there are at least two $G_i$ with eccentric vertices of $a$, say $v_1 \in G_1$ and $v_2 \in G_2$. Then $\dist(v_1, v_2) = 2\rad(G)$. 

For type B, for the lower bound, let $B$ be the central block with articulation points $a_i$ and articulation components $G_i$ (where $a_i \in G_i$). If $\diam(B) \geq \rad(G)$, then $2\rad(G) - \diam(B) \leq \rad(G)$, and the bound is no stronger than the classical one. Assume $\diam(B) < \rad(G)$, in which case no vertex in $B$ can have an eccentric vertex in $B$. Select any vertex $v \in B$ and select one of its eccentric vertices, without loss of generality, $u_1 \in G_1$. Then
\[\dist(a_1,u_1) = \dist(v,u_1) - \dist(v,a_1) \geq \rad(G) - \diam(B).\]
Now $a_1$ has its own eccentric vertices, including one we will name $u_2$, which is not in $G_1$ by Theorem~\ref{thm:whereismax}. Then
\[\dist(u_1, u_2) = \dist(u_1,a_1) + \dist(a_1,u_2) \geq 2\rad(G) - \diam(B).\]
The inequality is attained, as shown by Example~\ref{ex:sharp}.

For the upper bound on type B, consider two vertices of maximal distance, say $u_1$ and $u_2$. By Theorem~\ref{thm:whereismax}, they cannot be located in the same $G_i$, i.e., any path between them must pass through $B$. Say $u_1 \in G_1$ and $u_2 \in G_2$: then 
\[\dist(u_1, u_2) = \dist(u_1, a_1) + \dist(a_1, a_2) + \dist(a_2, u_2) \leq 2\becc(B) + \diam(B).\]
If $u_1 \in G_1$ and $u_2 \in B$, the equation is even simpler:
\[\dist(u_1, u_2) = \dist(u_1, a_1) + \dist(a_1, u_2) \leq \becc(B) + \diam(B).\]
\end{proof}

This theorem allows us to prove a complementary lower bound to Theorem~\ref{thm:bcenters}\eqref{case:bcentersB}.

\begin{cor}\label{cor:bcentersB} For a graph $G$ of type B with central block $B$,
\[\rad(G) - \brad(G) \leq \diam(B).\]
\end{cor}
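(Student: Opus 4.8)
The plan is to pull a central vertex into the central block $B$ using Theorem~\ref{thm:bcenters}\eqref{case:bcentersB}, and then bound its eccentricity by routing a geodesic to an eccentric vertex through the point of $B$ nearest that eccentric vertex. This is essentially a refinement of the ``$u_1 \in G_1$, $u_2 \in B$'' sub-case in the proof of the upper bound in Theorem~\ref{thm:r_vs_d}, specialized to a geodesic that realizes the radius rather than an arbitrary pair of vertices.

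Concretely, first choose any central vertex $v$, so that $\ecc(v) = \rad(G)$; since $G$ is type B, Theorem~\ref{thm:bcenters}\eqref{case:bcentersB} guarantees $v \in B$. Next choose an eccentric vertex $u$ of $v$, so $\dist(v,u) = \rad(G)$, and let $a$ be a vertex of $B$ attaining $\dist(a,u) = \dist(B,u)$ (take $a = u$ if $u \in B$). The triangle inequality then gives
\[\rad(G) = \dist(v,u) \leq \dist(v,a) + \dist(a,u).\]
For the first term, any geodesic between two vertices of the block $B$ must stay inside $B$ (a path that left $B$ into an articulation component would have to re-enter through the same articulation point), so $\dist(v,a)$ is just the distance within $B$ and hence $\dist(v,a) \leq \diam(B)$. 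For the second term, $\dist(a,u) = \dist(B,u) \leq \becc(B) = \brad(G)$ by definition, the last equality because $B$ is a central block. Combining yields $\rad(G) \leq \diam(B) + \brad(G)$, which rearranges to the claimed inequality. (One could replace the triangle inequality by Proposition~\ref{prop:separating}, taking $a$ to be the articulation point of $B$ lying in $u$'s articulation component when $u \notin B$, but this refinement is not needed.)

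There is no genuinely hard step; the one thing that must not be skipped is the appeal to Theorem~\ref{thm:bcenters} placing a central vertex inside $B$, since without $v \in B$ the quantity $\dist(v,a)$ need not be controlled by $\diam(B)$ and the argument collapses. Finally, it is worth noting the bound is sharp: for the $2n$-cycle with a length-$l$ path attached at each vertex from Example~\ref{ex:sharp}, one has $\rad(G) = l + n$, $\brad(G) = \becc(B) = l$, and $\diam(B) = n$, so $\rad(G) - \brad(G) = \diam(B)$.
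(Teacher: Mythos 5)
Your proof is correct, but it takes a genuinely different route from the paper. The paper derives the corollary in one line by combining the two bounds on $\diam(G)$ from Theorem~\ref{thm:r_vs_d}: the stated lower bound $2\rad(G) - \diam(B) \leq \diam(G)$ together with the auxiliary upper bound $\diam(G) \leq 2\brad(G) + \diam(B)$ established in that theorem's proof; subtracting gives the result. You instead give a direct, self-contained argument that bypasses the diameter entirely: place a central vertex $v$ in $B$ via Theorem~\ref{thm:bcenters}\eqref{case:bcentersB}, split a radius-realizing geodesic at the point of $B$ nearest the eccentric vertex, and bound the two pieces by $\diam(B)$ and $\becc(B) = \brad(G)$ respectively. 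Every step checks out, including the observation that a geodesic between two vertices of a block cannot leave the block (it would have to cross the same articulation point twice). What each approach buys: the paper's proof is essentially free once Theorem~\ref{thm:r_vs_d} is in hand, whereas yours is logically independent of the radius--diameter bounds, could be placed immediately after Theorem~\ref{thm:bcenters} as the ``complementary result'' promised there, and makes the mechanism transparent (the radius is realized by a path that crosses $B$ and then travels at most $\brad(G)$ farther). Your sharpness check against Example~\ref{ex:sharp} is also correct and is a nice addition the paper does not spell out for this corollary.
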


\begin{proof}
Theorem~\ref{thm:r_vs_d} says
\[2\rad(G) - \diam(B) \leq 2\brad(G) + \diam(B),\]
and the result follows.
\end{proof}

\section{Application: A subgraph realizing maximal eccentricity}\label{sec:subgraph}

We propose a subgraph which can be used to study both the vertex and block eccentricity. As suggested by the proofs of Lemma~\ref{lem:vcenters} and Theorem~\ref{thm:bcenters}, much of the block structure of a graph is redundant: there is a unique central block or central articulation point), and the graph branches out in articulation components from this central block/vertex. We may reduce the complexity of the graph by pruning down the number and size of such components.

\begin{const}\label{const:big} Let $G$ be a graph.
\begin{enumerate}
\item\label{case:bigA} Say $G$ is type A with $a$ the central point and $\{G_i\}$ the articulation components at $a$. For each $i$, identify a vertex in $G_i$ of maximal distance from $a$. Retain a geodesic from this vertex to $a$, along with all blocks from which it takes at least one edge. Delete the rest of $G_i$.
\item\label{case:bigB} Say $G$ is type B with $B$ the central block and $\{G_i\}$ the articulation components at $B$. For each $i$, identify a vertex in $G_i$ of maximal distance from $B$. Retain a geodesic from this vertex to $B$, along with all blocks from which it takes at least one edge. Delete the rest of $G_i$.
\end{enumerate}
\end{const}

\begin{thm}\label{thm:replacement}
Let $G$ be a connected graph and $G'$ be the altered graph given by Construction~\ref{const:big}:
\begin{enumerate}
\item $G'$ is a connected subgraph of $G$.
\item $\bct{G'}$ is a subgraph of $\bct{G}$.
\item $\bct{G'}$ is a path or starlike tree.
\item $G'$ has the same central points and blocks and the same vertex and block radius as $G$.
\item The peripheral vertices of $G'$ are a subset of the peripheral vertices of $G$, and it has the same vertex diameter as $G$.
\end{enumerate}
\end{thm}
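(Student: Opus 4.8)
The plan is to dispatch the structural claims (1)--(3) as bookkeeping about blocks and the BC-tree, and then to isolate two metric lemmas from which (4) and (5) follow with little effort; the two cases of Construction~\ref{const:big} are parallel, so I describe the type~B case and flag the type~A differences. Note first that in either case $G'$ is the union of a family of blocks of $G$: the central block $B$ (in type~A, the blocks incident to the central point $a$), together with, inside each articulation component $G_i$, the blocks traversed by the chosen geodesic. A geodesic meets each block it uses in a contiguous arc, and the sequence of blocks it visits is exactly the $\bct{G}$-path between the endpoints' blocks, so the blocks retained from $G_i$ form a path in $\bct{G}$ issuing from $\bct{a_i}$ (resp.\ $\bct{a}$). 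Each retained piece is connected and meets the connected central structure, which gives (1). For (2) I would verify that each retained block is still a maximal $2$-connected subgraph of $G'$, so $B(G')$ is precisely the retained part of $B(G)$, and that any articulation point of $G'$ lies in at least two retained blocks and hence in at least two blocks of $G$; since incidences are obviously preserved, $\bct{G'}\subseteq\bct{G}$. As a connected subgraph of the tree $\bct{G}$, $\bct{G'}$ is a tree; deleting the center $\bct{B}$ (resp.\ $\bct{a}$) leaves one component per $G_i$, each of them a path, so $\bct{G'}$ is a path or a starlike tree with branch vertex $\bct{B}$ (resp.\ $\bct{a}$), giving (3).

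The two metric lemmas are: (i) for $x,y\in V(G')$, $\dist_{G'}(x,y)=\dist_G(x,y)$, and (ii) for $x\in V(G')$, $\ecc_{G'}(x)=\ecc_G(x)$. For (i), ``$\ge$'' is automatic, and ``$\le$'' holds because a $G$-geodesic from $x$ to $y$ visits only blocks on the $\bct{G}$-path between the blocks of $x$ and $y$, all of which are retained since $\bct{G'}$ is a subtree of $\bct{G}$ containing both of those blocks; hence the geodesic already lies in $G'$. For (ii), ``$\le$'' is immediate from (i); for ``$\ge$'', use Theorem~\ref{thm:whereismax} to pick an eccentric vertex $u$ of $x$ in $G$ that is either in the retained central structure or in an articulation component $G_\ell$ from which $x$ is separated by $a_\ell$ (resp.\ $a$). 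In the first case $u$ is retained and (i) finishes it; in the second, Proposition~\ref{prop:separating} together with the defining maximality of the retained far vertex $w_\ell$ of $G_\ell$ (so $\dist_G(a_\ell,w_\ell)\ge\dist_G(a_\ell,u)$) gives $\dist_{G'}(x,w_\ell)=\dist_G(x,w_\ell)\ge\dist_G(x,u)=\ecc_G(x)$.

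From (ii), $\rad(G')=\min_{x\in V(G')}\ecc_G(x)\ge\rad(G)$, with equality because every central point of $G$ is retained, and the central points of $G'$ --- the $x\in V(G')$ with $\ecc_G(x)=\rad(G)$ --- are exactly those of $G$. For block radius, (i) gives $\becc_{G'}(B')\le\becc_G(B')$ for every retained block $B'$, and applying this with $u=w_i$ for the index realizing $\brad(G)=\max_i\dist_G(B,w_i)$ shows $\becc_{G'}(B)=\brad(G)$ (in type~A, every block at $a$ keeps eccentricity $\rad(G)$). The remaining point is that no other retained block $B'$ can have $\becc_{G'}(B')\le\brad(G)$: arguing by contradiction, a $G$-eccentric vertex $u'$ of such a $B'$ must be unretained, hence in the deleted part of some $G_\ell$, and a short case analysis --- on whether $G_\ell$ is $B'$'s own component and on where the geodesic from $B'$ to $u'$ leaves the retained geodesic of $G_\ell$ --- contradicts either the maximality defining some $w_j$ or the characterization of central blocks in Theorem~\ref{thm:bcenters}. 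So $\brad(G')=\brad(G)$, $G'$ has the same type as $G$, and Theorem~\ref{thm:bcenters} applied to $G'$ matches its central block(s) with those of $G$; this is (4). Finally, $\diam(G')=\max_{x\in V(G')}\ecc_G(x)\le\diam(G)$, with equality because a diametral pair of $G$ can be moved --- using Theorem~\ref{thm:whereismax} to put its two vertices in different components and then Proposition~\ref{prop:separating} to push each endpoint lying in a $G_i$ out to $w_i$ --- to a diametral pair inside $G'$; and a peripheral vertex of $G'$ satisfies $\ecc_G=\diam(G)$ by (ii), so lies in the periphery of $G$. This gives (5).

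The main obstacle is the block-radius half of (4): all the other parts reduce mechanically to the two metric lemmas, but block eccentricity can a priori drop under the pruning, and excluding this for every retained block --- not merely the central one --- is precisely where the structure of the central block(s) from Theorem~\ref{thm:bcenters} is essential.
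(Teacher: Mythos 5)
Your proposal is correct and follows essentially the same route as the paper: you show distances and hence eccentricities of retained vertices are preserved (via Theorem~\ref{thm:whereismax} and the maximality of the retained far vertices $w_i$), and read the structural claims off the BC-tree, just as the paper does with its leaf-by-leaf pruning. The one step you flag as the ``main obstacle'' --- ruling out a new central block with smaller $G'$-eccentricity --- is already settled by your own closing appeal to Theorem~\ref{thm:bcenters}: since a block is central iff it contains every central point, and the central points and the blocks containing them all survive with unchanged eccentricities, the central block(s) of $G'$ are forced to coincide with those of $G$, so the sketched ``short case analysis'' is unnecessary.
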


Recall that a \emph{starlike} tree has exactly one vertex of degree greater than 2.

\begin{proof}
We may examine the construction in the BC-tree rather than in $G$: for a graph of type A (respectively, type B), we pick a vertex $v$ in $G_i$ of maximal distance from $a$ (resp., $B$), say $v$ is in a block called $D$, and find a path in $\bct{G}$ from $\bct{D}$ to $\bct{a}$ (resp., $\bct{B}$). Alter $\bct{G}$ one step at a time by deleting leaves and their accompanying pendant edges, reflecting each deletion in $G$ as we go by removing a block. Deleting a leaf $\bct{D}$ in $\bct{G}$ will correspond to deleting a block $D$ in $G$ which was connected to a single articulation point $a_D$, which will reduce the size and order of the graph but not disconnect it. That is, $G'$ is connected, and $G'$ and $\bct{G'}$ are subgraphs of $G$ and $\bct{G}$, respectively. Additionally, as long as $\deg{\bct{a_D}} \geq 3$, removing $D$ reduces the number of blocks intersecting at $a_D$ but leaves it an articulation point, so $\bct{G'}$ is the BC-tree of $G'$. On the other hand, if $\deg{\bct{a_D}} = 2$, then $a_D$ is an articulation point between two blocks, and removing $D$ renders it a non-articulation point and converts $\bct{a_D}$ to a leaf itself, so $\bct{G'}$ is not a BC-tree of anything until we also remove $\bct{a_D}$ and its pendant edge.

By construction, all central blocks and so also all central points are retained in $G'$. Additionally, any $u$ and $v$ which survive to $G'$ will have the same distance there which they did in $G$: removing blocks which correspond to leaves in the BC-tree will not interfere with geodesics between surviving vertices or affect the relative distance between them; in particular, the eccentricity of any vertex or block will not increase from $G$ to $G'$. Similarly, the vertex eccentricity of $a$ or the block eccentricity of $B$ with respect to each $G_i$ will remain the same.

We next show that the radius does not change. If $G$ is type A, then the eccentricity of $a$ will not change: if $v_i$ is a vertex in $G_i$ of maximal distance from $a$, then either $v_i$ or some other vertex of equal distance in $G_i$ will survive to $G'$. Additionally, if vertex $v \in G_i$ survives to $G'$, then its eccentricity also does not change by Theorem~\ref{thm:whereismax} since it is realized by at least one vertex in some other $G_j$, and this vertex is separated from $v$ by $a$, so either it or another vertex of equal distance in $G_j$ will survive.

Similarly, if $G$ is type B, then the eccentricities of all the vertices in $B$ will also remain the same because the distance between vertices within $B$ will be unchanged, and the eccentricity of $B$ and so all its vertices with respect to any given $G_i$ will be the same from $G$ to $G'$. Additionally, the eccentricity of no vertex outside $B$ will change: by Theorem~\ref{thm:whereismax}, the eccentricity of a vertex in $G_i$ is attained only by some vertex from which it is separated by $a_i$, and that vertex is either in $B$ (so will survive) or is in some other $G_j$ (in which case it or another vertex of the same distance will survive).

We finally show that the diameter does not change. A type A graph remains type A because its central blocks remain central, and its radius is preserved, so its diameter is also preserved. For a type B graph, let $u$ and $v$ be vertices of maximal distance. Their geodesic must pass through $B$ by Theorem~\ref{thm:whereismax}, and any portion of it in $B$ is preserved. If it contains any portion in some $G_i$, then it passes through some $a_i$, and, even if that portion does not itself survive, some other geodesic in $G_i$ of the same length will. 
\end{proof}

Note that this construction could be optimized further. For example, if we were interested exclusively in diameter or in type A graphs, we could select two vertices of maximal distance in different $G_i$ and preserve only the path between them (which would retain the central block if type B and central point if type A); even if we wish only to study radius in type B graphs, we may still first select a set of all eccentric points of the central points along with geodesics back to their central points and retain the blocks with edges in these geodesics.

\section{Application: The Randi\'c index of a cactus graph}\label{sec:randic}

We now turn towards an invariant from mathematical chemistry which is used to study the branching of a graph. It was originally defined by Milan Randi\'c in 1975 in an attempt to mathematically characterize branching in a way consist with boiling point and other structure-related properties such as enthalpy of formation of alkanes and the relationship of vapor pressure to temperature~\cite{randic1975characterization}. It has been experimentally verified to be associated to boiling and reactivity of hydrocarbons and has now become a standard tool for evaluating molecular structure in quantitative structure-activity relationship (QSAR) models, that is, regressive models that predict biological activity, physicochemical properties, and toxicological responses of chemical compounds based on their molecular structure (see, for example, \cite{kier1986molecular, pogliani2000molecular, garcia2008some, todeschini2008handbook, kier2012molecular}). 

Randi\'c originally formulated his invariant in terms of the graph adjacency matrix, although the the common formulation today is due to Balaban~\cite{balaban1982discriminating}. First, define the \emph{weight} of an edge $[u,v]$ to be:
\[w[u,v] = \frac{1}{\sqrt {\deg(u) \deg{v}}}\]
and then define the Randi\'c index of $G$ as:
\[\Rand(G) = \sum_{[u,v] \in E(G)} w[u,v]\]
although an alternate formulation due to Caparorossi, Gurman, Hansen, and Pavlovic in 2003 is~\cite{caporossi2003graphs}:
\[\Rand(G) = \frac{\#V(G)-n_0}{2} - \sum_{[u,v]\in E(G)} w^\ast[u,v]\]
where $n_0$ is the number of isolated vertices and $w^\ast[u,v]$ is a measure of asymmetry of edge weights, that is,
\[w^\ast[u,v] = \frac{1}{2}\left(\frac{1}{\sqrt{\deg{u}}} - \frac{1}{\sqrt{\deg{v}}}\right)^2,\]
which is sometimes easier to engage with as it shows, e.g., that $\Rand(G) \leq \frac{\#V(G)}{2}.$

The Randi\'c index is worth studying from a graph theoretic point of view, in particular because it identifies a type of branching not easily encapsulated by other invariants. The Graffiti computer prediction program first identified a possible link to graph radius, although it has been resistant to repeated efforts to prove it. There have since been similar investigations into diameter: 
\begin{conj}\label{conj}
Let $G$ be a graph.
\begin{enumerate}
\item \cite{fajtlowicz1988graffiti} If $G$ is not a path with an even number of vertices, $\Rand(G) \geq \rad(G).$
\item \cite{aouchiche2007variable} $\Rand(G) \geq \diam(G) + \sqrt 2 - \frac{\#V(G) + 1}{2}$.
\end{enumerate}
\end{conj}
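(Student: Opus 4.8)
\emph{Goal and tools.}
The claim to establish for cacti is the first part of Conjecture~\ref{conj}: if $G$ is a cactus that is not a path on an even number of vertices, then $\Rand(G)\ge\rad(G)$. I would assemble the proof from two ingredients. The first is the reduction subgraph $G'$ of Construction~\ref{const:big}: by Theorem~\ref{thm:replacement} it is a connected subgraph of $G$ with $\rad(G')=\rad(G)$ and with $\bct{G'}$ a path or starlike tree, so $G'$ is a chain of edges and cycles, or several such chains meeting at one cut vertex or one cycle. The second is a \emph{decomposition recipe} for $\Rand$ at an articulation point: if $a$ has articulation components $G_1,\dots,G_k$, with $d_i=\deg_{G_i}(a)$, $d=\sum_i d_i=\deg_G(a)$, $S_i=\sum_{v\sim a\text{ in }G_i}\deg(v)^{-1/2}$, and $S=\sum_i S_i$, then
\[
\Rand(G)=\sum_{i=1}^{k}\Bigl(\Rand(G_i)-\tfrac{1}{\sqrt{d_i}}\,S_i\Bigr)+\tfrac{1}{\sqrt d}\,S,
\]
since the only edge weights disturbed by the decomposition are those at $a$. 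From this I would extract the key monotonicity lemma: deleting a leaf block attached at $a$ strictly lowers $\Rand$, by at least $\sqrt{\deg_G(a)}-\sqrt{\deg_G(a)-1}$ if the block is an edge and by at least $\tfrac12$ if it is a cycle; iterating, $\Rand(G)\ge\Rand(G')$.

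\emph{Reduction.}
Since $\rad(G')=\rad(G)$, it now suffices to prove $\Rand(G')\ge\rad(G')$ when $G'$ is not an even path, and to dispose of the case that $G'$ is an even path $P_{2m}$ while $G\ne G'$. In that case $\rad(G)=m$ and the deficit to recover is only $\rad(P_{2m})-\Rand(P_{2m})=\tfrac32-\sqrt2$; since $G\ne G'$, the construction deleted at least one leaf block, and its attachment vertex has $G'$-degree at most $2$ (every vertex of $P_{2m}$ does) while its $G$-degree is larger, so somewhere along the deletions its degree passed through $3$ (or through $2$, at a path end), and that single deletion already dropped $\Rand$ by at least $\sqrt3-\sqrt2>\tfrac32-\sqrt2$. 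Hence $\Rand(G)\ge\Rand(G')+(\tfrac32-\sqrt2)=m$. So from here $G=G'$: a cactus whose \textsc{bc}-tree is a path or starlike tree, and paths are immediate since $\Rand(P_n)=\sqrt2+\tfrac{n-3}{2}>\lceil\tfrac{n-1}{2}\rceil=\rad(P_n)$ exactly when $n$ is odd.

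\emph{The core estimate.}
For a path/starlike cactus I would induct on the number of blocks, using the Caporossi identity $\Rand(G)=\tfrac{\#V(G)}{2}-\sum_e w^\ast(e)$ alongside the block calculus of Sections~\ref{sec:ecc}--\ref{sec:bound}. Let $B$ carry the center (or the central vertex $a$, in the type~A case of Theorem~\ref{thm:bcenters}); by Theorem~\ref{thm:r_vs_d} and Corollary~\ref{cor:bcentersB}, $\rad(G)$ is $\becc(B)$ plus a bounded amount inside $B$, and by Theorem~\ref{thm:whereismax} the value $\becc(B)$ is realized along a single articulation component, a cactus chain. The recipe expresses $\Rand(G)$ as $\Rand$ of the central block plus a nonnegative contribution from \emph{every} chain, in which each edge is worth at least $\tfrac12$ and each cycle $C$ is worth $\tfrac{|C|}{2}$, whereas a cycle of length $\ell$ lengthens its chain by at most $\lfloor\ell/2\rfloor$, so cycles never hurt. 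Peeling the leaf block at the far end of the dominant chain, I would maintain the inductive inequality $\Rand\ge\rad$ with the slack tracked through the degrees at the current attachment point; the positive $w^\ast$ at the cut vertices where chains branch, together with the contributions of the chains not realizing $\becc(B)$, are exactly what covers the ``missing half'' of each edge along the dominant chain.

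\emph{Main obstacle.}
The real difficulty is structural: an edge costs a full unit of radius but buys only about $\tfrac12$ of $\Rand$, so the argument must systematically recover the other half from the central block, from secondary chains, and from the positive $w^\ast$ at branching cut vertices --- and this accounting is \emph{tight}, degenerating precisely along even paths, which is why they are the sole exception. Carrying it out uniformly across the type~A and type~B configurations, in the presence of several chains of comparable length, and securing the narrow $\tfrac32-\sqrt2$ margin for the even-path case, is where the effort concentrates; the reduction to starlike cacti via Construction~\ref{const:big} is precisely what makes this bookkeeping finite and local.
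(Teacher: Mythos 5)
Your reduction step matches the paper's: Construction~\ref{const:big} produces a subgraph $G'$ with the same radius and diameter (Theorem~\ref{thm:replacement}), and your decomposition recipe at an articulation point is exactly the computation behind Lemma~\ref{lem:wedge}, giving $\Rand(G)\ge\Rand(G')$; your quantitative handling of the case where $G'$ degenerates to an even path while $G$ does not is a point the paper itself glosses over. The problem is what comes after. Your ``core estimate'' --- that $\Rand(G')\ge\rad(G')$ for a starlike cactus --- is the entire content of the theorem, and you do not prove it: you describe an induction you would like to run, concede that the accounting is tight and ``is where the effort concentrates,'' and along the way assert at least one false intermediate step (``each edge is worth at least $\tfrac12$'' of $\Rand$ already fails for an edge joining two articulation points of degree $4$, whose weight is $\tfrac14$). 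So the heart of the argument is missing.

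The paper sidesteps this with one observation you do not make: in the reduced cactus $G'$ every block is a bridge or a cycle and every articulation point meets few blocks, so every vertex has degree at most $4$ and $G'$ is a \emph{chemical} graph. The base case is then not proved from scratch but imported from Theorem~\ref{thm:base} (Cygan--Pilipczuk--\v{S}krekovski for the radius inequality, Stevanovi\'c for the diameter inequality), and the conclusion follows by chaining $\Rand(G)\ge\Rand(G')$ with $\rad(G)=\rad(G')$ and $\diam(G)=\diam(G')$. To complete your argument you would need either to find this reduction to known results or to actually carry out the delicate degree-by-degree induction you sketch. Separately, you address only part (1) of the conjecture; the paper's Theorem~\ref{thm:result} also obtains the diameter bound of part (2) by the same mechanism, and your write-up says nothing about it.
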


\begin{ex}\label{ex:basicr} For example, the path on $2$ vertices obeys $\diam=\rad=\Rand=1.$ For path $P_n$ on $n > 2$ vertices, $\diam=n-1,$ $\rad = \left\lfloor \frac{n}{2} \right\rfloor,$ and $\Rand=\frac{n-3}{2} + \sqrt 2.$ In either case, we have a clear relationship between graph size and either Randi\'c index or extremal eccentricities, which we may translate into a relationship between Randi\'c index and the eccentricities themselves:
\[\Rand(P_n) = \begin{cases} \rad(P_n) + \sqrt 2 - \frac{3}{2} & if~n~even\\ \rad(P_n) + \sqrt 2 - 1& if~n~odd\end{cases}\]
and
\[\Rand(P_n) = \diam(P_n) + \sqrt 2 - \frac{n+1}{2}.\]
For the cycle on $n$ vertices, $C_n$, $\diam = \rad = \left\lfloor \frac{n}{2} \right\rfloor$ and $\Rand = \frac{n}{2},$ so 
\[\Rand(C_n) \geq \rad(C_n) = \diam(C_n).\]
\end{ex}

In keeping with the relevance of this invariant to mathematical chemistry, these results are known by work of Cygan-Pilipczuk-\v{S}krekovski and Stevanovi\'c for \emph{chemical graphs}, those of degree at most 4, a necessary condition for a graph to model a molecular structure. 

\begin{thm}\label{thm:base} Let $G$ be a connected chemical graph.
\begin{enumerate}
\item \cite[Theorem 1.3]{cygan2012inequality}\label{biblio:chemical_rad} If $G$ is not an even path, \[\Rand(G) \geq \rad(G).\]
\item \cite[Theorem 5]{stevanovic2008randic}\label{biblio:chemical_diam} If $\#V(G) \geq 3$, \[\Rand(G) \geq \diam(G) + \sqrt{2} - \frac{\#V(G) + 1}{2}.\]
\end{enumerate}
\end{thm}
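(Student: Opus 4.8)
Theorem~\ref{thm:base} is not a new contribution but a compilation of two bounds already proved in the cited papers, so the plan is not to reprove them from scratch but to recall the mechanism behind each and to isolate the single hypothesis---$\deg\le 4$---that makes them tractable, since the cactus argument in the rest of this section will need a substitute for it. Both parts are cleanest through the Caporossi--Gurman--Hansen--Pavlovi\'c identity quoted above: for a graph with no isolated vertices,
\[\Rand(G) = \frac{\#V(G)}{2} - \sum_{[u,v]\in E(G)} w^\ast[u,v],\]
so each inequality is equivalent to an upper bound on the total edge-asymmetry $\sum w^\ast[u,v]$. The chemical hypothesis enters precisely here: with all degrees in $\{1,2,3,4\}$ one has $w^\ast[u,v]\le \tfrac12\bigl(1-\tfrac12\bigr)^2 = \tfrac18$ on every edge, much smaller on any edge whose endpoints have nearly equal degree, while any edge at a leaf contributes weight $w[u,v]\ge \tfrac12$ in the original formulation.

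For part~\eqref{biblio:chemical_rad}, the idea of Cygan--Pilipczuk--\v{S}krekovski is to fix a central vertex and a geodesic of length $\rad(G)$ to one of its eccentric vertices; this path carries $\rad(G)$ edges, and one argues that their combined weight, together with the extra edges the degree bound forces to hang off the path (a path vertex of degree $\le 4$ can spawn only boundedly many low-degree neighbours, each cheap in the $w^\ast$ sense), keeps $\sum w^\ast$ below the needed threshold, with equality pushed into the even-path family where the bound is genuinely tight. For part~\eqref{biblio:chemical_diam}, Stevanovi\'c's argument is the diametral analogue: fix a diametral path $P$ with $\diam(G)+1$ vertices; the two terminal edges of $P$ are leaf-type and together account for the $+\sqrt2$, the vertices off $P$ are exactly what the corrective term $-\tfrac{\#V(G)+1}{2}$ is calibrated to absorb, and $\deg\le 4$ again bounds the asymmetry of every interior edge so that the whole sum stays controlled.

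The main obstacle---and the reason Conjecture~\ref{conj} remains open in general---is exactly the step that collapses without the degree bound: a single high-degree hub can simultaneously shrink many edge weights toward $0$ and push many $w^\ast[u,v]$ toward $\tfrac12$, so the ``free weight'' one can harvest off a geodesic or a diametral path may be negligible while $\sum w^\ast$ is large. Every estimate above rests on $\deg\le 4$ to cap $w^\ast$ per edge and on the geodesic/diametral-path structure to guarantee a controlled supply of low-degree vertices.

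In the rest of this section the role played here by the degree bound will be taken over by the sparsity of a cactus---each edge lies in at most one cycle---together with the skeleton subgraph of Construction~\ref{const:big}, which lets us reduce the Randi\'c estimate to a path-plus-pendant-cycles configuration whose asymmetry sum can be bounded directly; Theorem~\ref{thm:base} itself will then be invoked only as the base case on the (chemical) pieces where it applies.
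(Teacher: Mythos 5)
The paper offers no proof of Theorem~\ref{thm:base}: it is a bare citation of the two results of Cygan--Pilipczuk--\v{S}krekovski and Stevanovi\'c, and your proposal correctly treats it the same way, deferring to those papers and adding only an expository (non-load-bearing) sketch of their mechanisms. That matches the paper's approach, so there is nothing further to check here beyond the citations themselves.
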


We will apply Construction~\ref{const:big} to a generic cactus graph and then induct from the base case of chemical graphs, so we will first need a lemma about deleting blocks:

\begin{lemma}\label{lem:wedge}
Let $G$ be a graph with $a$ an articulation point and $\{G_i\}$ the articulation components at $a$. Then: 
\[R(G) \geq \sum \Rand(G_i) + \sqrt{\deg{a}} - \sum \sqrt{\deg{a_i}}.\]
In particular, $\Rand(G) \geq \Rand(G_i)$ for any $i$.
\end{lemma}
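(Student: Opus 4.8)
The plan is to prove the displayed inequality by bookkeeping with the edge-weight formula, and then to read the ``in particular'' clause off of it.

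First I would record that the articulation components $\{G_i\}$ at $a$ partition $E(G)$: every edge of $G$ lies in exactly one $G_i$, every vertex other than $a$ lies in exactly one $G_i$, and hence $\deg_G(a) = \sum_i \deg_{G_i}(a)$. Writing $a_i$ for $a$ regarded as a vertex of $G_i$, set $d = \deg(a)$ and $d_i = \deg(a_i) = \deg_{G_i}(a)$, so $d = \sum_i d_i$ with each $d_i \geq 1$. Then $\Rand(G) = \sum_i \sum_{e \in E(G_i)} w_G(e)$, and I compare $w_G$ with $w_{G_i}$ edge by edge inside a fixed $G_i$. An edge of $G_i$ not incident to $a$ has the same endpoint degrees in $G_i$ as in $G$, so its weight is unchanged; an edge $[a,u] \in E(G_i)$ has $\deg_G(u) = \deg_{G_i}(u)$ but $\deg_G(a) = d \geq d_i = \deg_{G_i}(a)$, so
\[w_G[a,u] - w_{G_i}[a,u] = \frac{1}{\sqrt{\deg(u)}}\left(\frac{1}{\sqrt{d}} - \frac{1}{\sqrt{d_i}}\right) \geq \frac{1}{\sqrt{d}} - \frac{1}{\sqrt{d_i}},\]
the inequality being the elementary fact that $c/\sqrt{\deg(u)} \geq c$ whenever $c \leq 0$ and $\deg(u) \geq 1$ (here $c = \tfrac{1}{\sqrt d} - \tfrac{1}{\sqrt{d_i}} \le 0$). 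Since $G_i$ contains exactly $d_i$ edges at $a$, summing over them gives $\sum_{e \in E(G_i)} w_G(e) \geq \Rand(G_i) + d_i(\tfrac{1}{\sqrt d} - \tfrac{1}{\sqrt{d_i}}) = \Rand(G_i) + \tfrac{d_i}{\sqrt d} - \sqrt{d_i}$; summing over $i$ and using $\sum_i d_i = d$ to collapse $\sum_i \tfrac{d_i}{\sqrt d}$ to $\sqrt d$ yields exactly $\Rand(G) \geq \sum_i \Rand(G_i) + \sqrt{d} - \sum_i \sqrt{d_i}$.

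For the ``in particular'' statement, fix $i$ and let $K$ be the subgraph of $G$ induced by $a$ together with all vertices outside $G_i$. Since each $G_j$ with $j \neq i$ is connected and contains $a$, $K$ is connected; it meets $G_i$ exactly in $a$, is edge-disjoint from $G_i$, satisfies $G_i \cup K = G$, and $\deg_K(a) = d - d_i$. The weight computation above used only that the two pieces are edge-disjoint connected subgraphs meeting precisely in $a$ and covering $G$, so it applies verbatim to the decomposition $\{G_i, K\}$ and gives
\[\Rand(G) \geq \Rand(G_i) + \Rand(K) + \sqrt{d} - \sqrt{d_i} - \sqrt{d - d_i}.\]
Thus it suffices to know $\Rand(K) \geq \sqrt{\deg_K(a)} = \sqrt{d - d_i}$, for then the last three terms above are $\geq \sqrt{d} - \sqrt{d_i} \geq 0$, whence $\Rand(G) \geq \Rand(G_i)$.

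The one genuine obstacle is this final estimate: a connected graph $H$ satisfies $\Rand(H) \geq \sqrt{\deg_H(v)}$ for every vertex $v$. Everything else is mechanical, but this point needs care, because the Randi\'c index is not monotone under deleting edges or vertices, so one cannot simply prune $H$ down to the star at $v$. The cleanest route is to invoke the Bollob\'as--Erd\H{o}s bound $\Rand(H) \geq \sqrt{\#V(H) - 1}$ for connected $H$, together with $\#V(H) \geq \deg_H(v) + 1$; if a self-contained argument is preferred I would isolate $\Rand(H) \geq \sqrt{\deg_H(v)}$ as a short preliminary lemma, proved by induction on $\#V(H)$ via removal of a degree-one vertex (which is easily seen to strictly decrease $\Rand$), with the minimum-degree-at-least-two case handled separately by a crude edge-count estimate.
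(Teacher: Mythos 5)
Your proof is correct and follows essentially the same route as the paper's: the identical edge-weight bookkeeping at $a$ (your per-edge bound $1/\sqrt{\deg u}\leq 1$ is exactly the paper's $S(i)\leq\sqrt{\deg a_i}$), and the same appeal to the Bollob\'as--Erd\H{o}s star bound $\Rand(H)\geq\sqrt{\#V(H)-1}$ for the ``in particular'' clause. The only cosmetic difference is that you apply that bound once to the union $K$ of the remaining components, whereas the paper applies $\Rand(G_j)\geq\sqrt{\deg a_j}$ to each component separately; both yield the conclusion.
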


\begin{proof}
Consider $G_i$ as a graph and not a subgraph of $G$. Let $a_i$ be the copy of $a$ in $G_i$ and $N(a_i)$ all its adjacent vertices. All edges have the same weight in $G$ and $G_i$ except for those incident to $a_i$. Let $S(i)$ be the sum of the weights of all edges in $G_i$ incident to $a$, or
\[S(i) = \sum_{w \in N(a_i)} \frac{1}{\sqrt{\deg{w}}\sqrt{\deg{a_i}}} \leq \sum_{w \in N(a_i)}\frac{1}{\sqrt{\deg{a_i}}} = \sqrt{\deg{a_i}}.\]
Now:
\begin{multline*}
\Rand(G) - \sum \Rand(G_i) = \sum -S(i) + S(i)\frac{\sqrt{\deg{a_i}}}{\sqrt{\deg{a}}}\\ \geq \sum -\sqrt{\deg{a_i}} + \frac{\deg{a_i}}{\sqrt{\deg{a}}}
= \sqrt{\deg{a}} - \sum \sqrt{\deg{a_i}}.
\end{multline*}

Finally, a result of Bollob\'as and Erd\H{o}s \cite[Theorem 3]{bollobas1998extremal} shows the minimal Randi\'c index on a fixed size graph is realized at the star, or $\Rand(G) \geq \sqrt{\#V(G)-1}$, so $\Rand(G_i) \geq \sqrt{\deg{a_i}}$.
\end{proof}

As an immediate consequence of Theorem~\ref{thm:replacement}, we have

\begin{thm}\label{thm:result}
If $G$ is a cactus,
\[\begin{split}
\Rand(G) &\geq \rad(G) + \sqrt {2} - \frac{3}{2}.\\
\Rand(G) & \geq \diam(G) + \sqrt{2} -  \frac{\#V(G) + 1}{2}.
\end{split}\]
In particular, Conjecture~\ref{conj} is true for cactus graphs.
\end{thm}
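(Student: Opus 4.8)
The plan is to reduce an arbitrary cactus $G$ to a chemical graph and then quote Theorem~\ref{thm:base}. First I would replace $G$ by the pruned graph $G'$ of Construction~\ref{const:big}. By Theorem~\ref{thm:replacement}, $G'$ is a connected subgraph with $\rad(G')=\rad(G)$, $\diam(G')=\diam(G)$, $\#V(G')\le\#V(G)$, and $\bct{G'}$ a path or starlike tree; since $G'$ is a subgraph of a cactus it is again a cactus. Each step of Construction~\ref{const:big} deletes a leaf block, i.e.\ removes a single articulation component at its attaching articulation point, and the computation proving Lemma~\ref{lem:wedge} (keep the remaining components, apply $\Rand(\,\cdot\,)\ge\sqrt{\#V-1}$ to the deleted one, and note $\sqrt{\deg_G a}\ge\sqrt{\deg_{G'}a}$) shows such a deletion cannot increase the Randi\'c index; hence $\Rand(G)\ge\Rand(G')$. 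Both target inequalities are monotone in the favorable direction — note $\sqrt2-\tfrac32<0$, and $-\tfrac{\#V+1}{2}$ grows as $\#V$ shrinks — so it will suffice to exhibit a chemical subgraph $H\subseteq G'$ with $\Rand(G')\ge\Rand(H)$ and with $\rad(H)\ge\rad(G)$, $\diam(H)=\diam(G)$, $\#V(H)\le\#V(G)$, and then invoke Theorem~\ref{thm:base} on $H$ (the excluded even-path case and the tiny cases $\#V\le2$ being read off from Example~\ref{ex:basicr} by hand).

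The cactus-specific input is that every block of $G'$ is an edge or a cycle, so a vertex has degree within any incident block at most $2$, and can have degree $>4$ only if it lies in at least three blocks. If $G'$ is type B with central block $B$, I would observe that after Construction each articulation component at $B$ has been pruned to a single ``block path'' attached at one vertex of $B$, and distinct components attach at distinct vertices of $B$ since $\bct{G'}$ is a tree; hence no vertex of $G'$ lies in three blocks, so $G'$ is already chemical. Then $H=G'$ works and Theorem~\ref{thm:base} gives both bounds for $G'$, hence for $G$.

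If $G'$ is type A with central point $a$, then $G'$ itself may fail to be chemical because many blocks meet at $a$, but Theorem~\ref{thm:whereismax}\eqref{case:whereismaxA} supplies two articulation components $G_1,G_2$ at $a$ each containing a vertex at distance $\rad(G')=\rad(G)$ from $a$. I would take $H=G_1\cup G_2$: its BC-tree is a path of edge/cycle blocks, so $H$ is a chemical block path; deleting the other articulation components at $a$ does not raise $\Rand$, so $\Rand(G')\ge\Rand(H)$; and since $a$ separates the chosen far vertices of $G_1,G_2$ we get $\diam(H)\ge 2\rad(G)$, while $\diam(H)\le\diam(G')=\diam(G)=2\rad(G)$ by Theorem~\ref{thm:r_vs_d}, so $\diam(H)=\diam(G)$. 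Now $\rad(H)\ge\lceil\diam(H)/2\rceil=\rad(G)$, so Theorem~\ref{thm:base}\eqref{biblio:chemical_rad} yields $\Rand(H)\ge\rad(H)\ge\rad(G)\ge\rad(G)+\sqrt2-\tfrac32$, and Theorem~\ref{thm:base}\eqref{biblio:chemical_diam} yields $\Rand(H)\ge\diam(H)+\sqrt2-\tfrac{\#V(H)+1}{2}\ge\diam(G)+\sqrt2-\tfrac{\#V(G)+1}{2}$; chaining through $\Rand(G)\ge\Rand(G')\ge\Rand(H)$ finishes the type A case (if $\#V(H)<3$ then $\rad(G)=0$ and $G$ is a point, a trivial case).

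I expect the main obstacle to be the structural bookkeeping rather than any inequality: checking precisely that pruning a cactus by Construction~\ref{const:big} produces a chemical graph in the type B case and a chemical block path in the type A case — i.e.\ pinning down exactly where a surviving vertex can lie in three or more blocks and confirming that the only offender is the type A center $a$, which the two-branch subgraph $H$ then repairs — and confirming that every pruning step (removing one articulation component) is covered by the argument behind Lemma~\ref{lem:wedge} so that $\Rand$ is genuinely nonincreasing. Once that is set up, the arithmetic is painless, precisely because $\sqrt2-\tfrac32<0$ makes the radius bound slack and because Theorem~\ref{thm:r_vs_d} forces $\diam(G)=2\rad(G)$ in exactly the type A situation where $G'$ need not be chemical.
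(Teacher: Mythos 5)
Your proposal is correct and follows the paper's overall strategy --- prune $G$ to $G'$ via Construction~\ref{const:big}, use Lemma~\ref{lem:wedge} to see that pruning does not increase $\Rand$, and then quote Theorem~\ref{thm:base} --- but you add a case analysis that the paper's own proof skips, and that extra work is in fact necessary. The paper asserts that in $G'$ ``each articulation point connects exactly two blocks,'' hence $\deg\le 4$; as you observe, this is true for every articulation point except possibly the type~A central point $a$, which retains one block per articulation component and can easily have degree exceeding $4$ (e.g.\ three triangles, or five pendant edges, wedged at $a$), so $G'$ need not be chemical in the type~A case. Your repair --- pass to $H=G_1\cup G_2$ for two branches containing eccentric vertices of $a$ (Theorem~\ref{thm:whereismax}\eqref{case:whereismaxA}), note $H$ is a chemical block path with $\diam(H)=2\rad(G)=\diam(G)$ by Theorem~\ref{thm:r_vs_d} and $\rad(H)\ge\lceil\diam(H)/2\rceil=\rad(G)$, and chain through $\Rand(G)\ge\Rand(G')\ge\Rand(H)$ --- is sound, since $a$ separates the two branches so $H$ is isometrically embedded and the Lemma~\ref{lem:wedge} computation applies to deleting the remaining branches. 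Your type~B argument matches the paper's (each articulation component at $B$ attaches at its own articulation point because deleting $\bct{B}$ from the tree $\bct{G'}$ leaves one component per neighbor), and your handling of the even-path and tiny exceptional cases via Example~\ref{ex:basicr} is adequate given the $\sqrt2-\tfrac{3}{2}$ slack in the stated bound. In short: same route as the paper, but with a genuine gap in the published type~A argument identified and filled.
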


\begin{proof}
Consider a subgraph $G'$ given by Construction~\ref{const:big}. Its BC-tree is starlike or a tree. Since $G$ is a cactus, any block is either a cycle or a bridge, and each articulation point connects exactly two blocks, so an articulation point between bridges has degree 2; between a bridge and a cycle degree 3; between two cycles degree 4. Therefore, $G'$ is a chemical graph, and the result holds for $G'$ by the work of  referenced in Theorem~\ref{thm:base}.

By Lemma~\ref{lem:wedge}, $G'$ has smaller Randi\'c index than $G$; by Theorem~\ref{thm:replacement}, it has the same radius and diameter. 
\end{proof}

\bibliographystyle{alpha}
\bibliography{/Users/mdo55093/Documents/math/biblio-MASTER}

\begin{thebibliography}{GDGdJOP08}

\bibitem[AHZ07]{aouchiche2007variable}
Mustapha Aouchiche, Pierre Hansen, and Maolin Zheng.
\newblock Variable neighborhood search for extremal graphs: 19: {F}urther
  conjectures and results about the {R}andi{\'{c}} index.
\newblock {\em MATCH Commun. Math. Comput. Chem.}, 58(1):83, 2007.

\bibitem[Bal82]{balaban1982discriminating}
Alexandru~T Balaban.
\newblock Highly discriminating distance-based topological index.
\newblock {\em Chem. Phys. Lett.}, 89(5):399--404, 1982.

\bibitem[BE98]{bollobas1998extremal}
B{\'{e}}la Bollob{\'{a}}s and Paul Erd{\H{o}}s.
\newblock Graphs of extremal weights.
\newblock {\em Ars Combin.}, 50:225--233, 1998.

\bibitem[BH90]{buckley1990distance}
Fred Buckley and Frank Harary.
\newblock Distance in graphs.
\newblock {\em (No Title)}, 1990.

\bibitem[CGHP03]{caporossi2003graphs}
Gilles Caporossi, Ivan Gutman, Pierre Hansen, and Ljiljana Pavlovi{\'{c}}.
\newblock Graphs with maximum connectivity index.
\newblock {\em Comput. Biol. and Chem.}, 27(1):85--90, 2003.

\bibitem[CP{\v{S}}12]{cygan2012inequality}
Marek Cygan, Micha{\l{}} Pilipczuk, and Riste {\v{S}}krekovski.
\newblock On the inequality between radius and {R}andi{\'{c}} index for graphs.
\newblock {\em MATCH Commun. Math. Comput. Chem.}, 67(2):451--466, 2012.

\bibitem[Faj88]{fajtlowicz1988graffiti}
Siemion Fajtlowicz.
\newblock On conjectures of {G}raffiti.
\newblock {\em Discrete Math.}, 72(1-3):113--118, 1988.

\bibitem[GDGdJOP08]{garcia2008some}
Ram{\'{o}}n Garc{\'{i}}a-Domenech, Jorge G{\'{a}}lvez, Jesus~V.
  de~Juli{\'{a}}n-Ortiz, and Lionello Pogliani.
\newblock Some new trends in chemical graph theory.
\newblock {\em Chem. Rev.}, 108(3):1127--1169, 2008.

\bibitem[HN53]{harary1953dissimilarity}
Frank Harary and Robert~Z Norman.
\newblock The dissimilarity characteristic of {H}usimi trees.
\newblock {\em Annals of Mathematics}, pages 134--141, 1953.

\bibitem[Jor69]{jordan1869assemblages}
Camille Jordan.
\newblock Sur les assemblages de lignes.
\newblock {\em J. Reine Angew. Math.}, 70:185--190, 1869.

\bibitem[KH76]{kier2012molecular}
Lemont~B. Kier and Lowell~H. Hall.
\newblock {\em Molecular {C}onnectivity in {C}hemistry and {D}rug {R}esearch}.
\newblock Academic Press, New York, 1976.

\bibitem[KH86]{kier1986molecular}
Lemont~Burwell Kier and Lowell~H. Hall.
\newblock {\em Molecular {C}onnectivity in {S}tructure-{A}ctivity {A}nalysis}.
\newblock Research Studies Press, Letchworth, Hertfordshire, England, 1986.

\bibitem[Ost73]{ostrand1973graphs}
Phillip~A. Ostrand.
\newblock Graphs with specified radius and diameter.
\newblock {\em Discrete Math.}, 4:71--75, 1973.

\bibitem[Pog00]{pogliani2000molecular}
Lionello Pogliani.
\newblock From molecular connectivity indices to semiempirical connectivity
  terms: {R}ecent trends in graph theoretical descriptors.
\newblock {\em Chem. Rev.}, 100(10):3827--3858, 2000.

\bibitem[Ran75]{randic1975characterization}
Milan Randi{\'{c}}.
\newblock Characterization of molecular branching.
\newblock {\em J. Am. Chem. Soc.}, 97(23):6609--6615, 1975.

\bibitem[Ste08]{stevanovic2008randic}
Dragan Stevanovi{\'{c}}.
\newblock On the {R}andi{\'{c}} index and diameter of chemical graphs.
\newblock {\em Recent Results in the Theory of Randi{\'{c}} Index, Univ.
  Kragujevac, Kragujevac}, pages 49--55, 2008.

\bibitem[TC08]{todeschini2008handbook}
Roberto Todeschini and Viviana Consonni.
\newblock {\em Handbook of molecular descriptors}, volume~11.
\newblock John Wiley \& Sons, 2008.

\end{thebibliography}

\end{document}